\newif\ifmyfig \myfigtrue
\newif\ifmytab \mytabtrue
\newtheorem{theorem}{Theorem}
\newtheorem{proposition}[theorem]{Proposition}
\newtheorem{lemma}[theorem]{Lemma}
\theoremstyle{definition}
\newtheorem{example}{Example}
\def\pd#1{\partial_{#1}}
\title{
Holonomic gradient method for the probability content of a simplex region
under a multivariate normal distribution
}
\author{Tamio Koyama}
\date{}
\begin{document}
\maketitle

\begin{abstract}
We use the holonomic gradient method
to evaluate the probability content of a simplex region
under a multivariate normal distribution.
This probability equals to the integral of the probability density function 
of the multivariate Gaussian distribution on the simplex region.
For this purpose, we generalize the inclusion--exclusion 
identity which was given for polyhedra, to the faces of a polyhedron.
This extended inclusion--exclusion identity enables us to
calculate the derivatives of the function associated with
the probability content of a polyhedron in general position.
We show that these derivatives can be written as integrals of
the faces of the polyhedron.

\noindent{\bf keyword\/:}
Holonomic Gradient Method,
Inclusion--Exclusion Identity, 
Tukey--Kramer Studentized Range
\end{abstract}

\section{Introduction}
The holonomic gradient method (HGM) is an algorithm for the numerical calculation 
of holonomic functions. 
It is a variation on the holonomic gradient descent (HGD) proposed in 
\cite{NNNOSTT2011}.
A holonomic function is an analytic function of several variables
which satisfies a holonomic system.
Here, a holonomic system refers to a system of linear differential equations 
with polynomial coefficients which induces a holonomic module in terms of 
$D$-module theory \cite{oaku1994}. 
The HGM evaluates a holonomic function by numerically solving 
an initial value problem for an ordinary differential equation.
This ordinary differential equation is derived from the Pfaffian equation
(an integrable connection) associated with the function. 
For details, see \cite{Koyama2015} and its references.
Several  normalizing constants and the probability content of a region
can be regarded as a holonomic function with respect to their parameters,
and we can use the HGM to evaluate the solution to this function.
For example, the HGM was used to evaluate the cumulative distribution function 
for the largest root of the Wishart matrix in \cite{HNTT2013},
and we utilized the HGM 
for the orthant probability and the Fisher--Bingham integral
in \cite{2015koyama-takemura1} and \cite{2015koyama-takemura2}, respectively.

In this paper, we apply the HGM to the numerical calculation of
the probability content of a polyhedron with a multivariate normal distribution.
A polyhedron is a subset of a $d$-dimensional Euclidean space $\mathbf R^d$
which is defined by a finite number of linear inequalities.
In \cite{naiman-wynn1992}, Naiman and Wynn described examples of
how the evaluation of the probability content of a polyhedron can be used 
to find critical probabilities for multiple comparisons.
The numerical calculation of the probability content is discussed 
in \cite{mkh2012}.

The probability content of a polyhedron is a generalization of 
orthant probabilities discussed in 
\cite{plackett1954}, \cite{mkh2003},
and \cite{2015koyama-takemura1},
since the orthant probabilities can be expressed as the probability content
of a simplicial cone.
We derived the HGM for orthant probabilities in \cite{2015koyama-takemura1},
and its implementation in \cite{knost2014}.
A study of phylogenomics utilized our implementation 
in \cite{weyenberg-howe-yoshida2015}.

In order to utilize the HGM for the numerical calculation of
the probability content of a polyhedron,
we need to provide the Pfaffian equation explicitly and
to evaluate the initial value for the ordinary differential equation.
Our previous paper \cite{Koyama2015} showed that
the probability content of a polyhedron in general position can be expressed as
an analytic function and we explicitly provided a holonomic system 
and Pfaffian equations for this function.
This analytic function is an interesting special case
of the integral considered in \cite{aomoto-kita-orlik-terao}, 
and the Pfaffian equations is a generalization of
a differential-recurrence formula for the orthant probability
in \cite{plackett1954}.

In this paper, we show how to calculate the derivatives of the function,
and prove that these derivatives can be written as integrals of
the faces of the polyhedron.
This result provides formulae to compute the initial value exactly
for the cases where the polyhedron is in general position and bounded,
 or the polyhedron is a simplicial cone.
In order to calculate the derivatives, 
we generalize the inclusion--exclusion identity that was given for polyhedra
in \cite{edelsbrunner}, to the faces of the polyhedron.
Since a face of a polyhedron is also a polyhedron, 
the inclusion--exclusion identity for the face holds, i.e., 
the indicator function of the face can be written as 
a linear combination of indicator functions of simplicial cones.
Our generalized inclusion--exclusion identity gives 
an explicit expression for this linear combination.

In the numerical experiments, 
utilizing the theoretical results concerned with 
the Pffafian equation and the initial value, 
we implement the HGM to evaluate the probability contents
of a simplex and a simplicial cone.
We show that our implementation works well 
for a 10-dimensional simplex.

This paper is organized as follows. 
In section \ref{sec:prev} 
we review results from our previous paper \cite{Koyama2015}.
In section \ref{sec:I-E}
we extend the inclusion--exclusion identity
which was given for polyhedra in \cite{edelsbrunner}, 
and provide an analogous formula for the indicator function of a face of 
a polyhedron.
In section \ref{sec:diff}
we calculate the derivatives of 
the function defined by the probability content of a polyhedron 
for the multivariate normal distribution, 
and show that these derivatives can be written by integrals on corresponding faces.
In section \ref{sec:HGM}
attention is directed towards
the case where the polyhedron in general position is bounded,
and the case where the polyhedron is a simplicial cone.
We discuss the evaluation of the multivariate normal probabilities 
of polyhedra by the HGM in these two cases.
In section \ref{sec:NE} 
we present numerical examples.

{\bf Acknowledgements.}\/
This work was supported by JSPS KAKENHI Grant Number 263125.

\section{Summary of Previous Work}\label{sec:prev}
In this section we review the results of our previous paper \cite{Koyama2015}.
Let us consider a polyhedron
\begin{equation}\label{defP}
P := 
\left\{
x\in \mathbf R^d : 
\sum_{i=1}^d \tilde a_{ij}x_i+\tilde b_j \geq 0,\, 1\leq j \leq n 
\right\}
\end{equation}
where $\tilde a_{ij}, \tilde b_j\,(1\leq i\leq d,\,1\leq j\leq n)$ are real numbers.
We denote by $\tilde a$ and $\tilde b$ the $d\times n$ matrix $(\tilde a_{ij})$ 
and the vector $(\tilde b_1,\dots, \tilde b_n)^\top$ respectively.
We suppose that 
the polyhedron $P$ is in general position and 
 its bounding half-spaces are 
\begin{equation}\label{bd-spaces}
H_j := 
\left\{x\in\mathbf R^d: \sum_{i=1}^d \tilde a_{ij}x_i+\tilde b_j \geq 0 \right\}
\quad \left( 1 \leq j \leq n \right).
\end{equation}
For the definitions of general position and
the set of the bounding half-spaces for a polyhedron, 
see \cite{Koyama2015}.

We denote by $\mathcal F$ the abstract simplicial complex associated with 
the polyhedron $P$, i.e., 
$$
\mathcal F :=
\left\{
J\subset\{1,2,\dots, n\} \mid \bigcap_{j\in J} H_j \neq \emptyset
\right\}.
$$
Let 
\begin{equation}\label{eq:varphi}
\varphi(a,b) = 
\int_{\mathbf R^d}
\frac{1}{(2\pi)^{d/2}}
\exp\left(-\frac{1}{2}\sum_{i=1}^dx_i^2\right)
\sum_{F\in \mathcal F}\prod_{j\in F} \left(H(f_j(a,b,x))-1\right)
dx
\end{equation}
be a function with variables $a_{ij}, b_j\,(1\leq i\leq d,\,1\leq j \leq n)$.
Here, we set $f_j(a,b,x)=\sum_{i=1}^da_{ij}x_i+b_j$
and $H(x)$ is the Heaviside function, i.e., 
$$
H(x):=
\begin{cases}
1 & (x\geq 0)\\
0 & (x  <  0).
\end{cases}
$$
We denote by $a$ and $a_j\,( j=1,\dots n)$ 
the $d\times n$ matrix with elements $a_{ij},(i=1,\dots d)$ and the column vector $(a_{1j},\dots,a_{dj})^\top$ 
respectively.
We denote by $b$ a column vector $(b_1,\dots,b_n)^\top$ with length $n$.
For $J\in\mathcal F$, we put 
\begin{equation}\label{22}
g^J(a,b)= \left(\prod_{j\in J}\partial_{b_j}\right)\bullet\varphi(a,b),
\end{equation}
and let $g(a,b)=(g^J(a,b))_{J\in \mathcal F}$ be a vector-valued function.
Then $g(a,b)$ satisfies the following Pfaffian equations 
\cite[Theorem 22]{Koyama2015} :
\begin{align}
\label{pfaff1} \pd{a_{ij}}g^J &= \sum_{k=1}^na_{ik}\pd{b_k}\pd{b_j}g^J
\quad (1\leq i\leq d, \, 1\leq j\leq n, \, J\in \mathcal F), \\
\label{pfaff2} \pd{b_j} g^J &= g^{J\cup \{j\}} \quad (j\in J^c, \, J\in \mathcal F), \\
\label{pfaff3} \pd{b_j}g^J 
&= -\sum_{k\in J}\alpha^{jk}_J(a)
\left(b_kg^J+\sum_{\ell\in J^c}\alpha_{k\ell}(a)g^{J\cup \ell}\right)
\quad (j\in J, \, J\in\mathcal F).
\end{align}
Here, $(\alpha^{ij}_F(a))_{i, j\in F}$ is the inverse matrix of 
$\alpha_F(a)=\left(\sum_{k=1}^d a_{ki}a_{kj}\right)_{i, j\in F}$, 
which is a sub matrix of the Gram matrix of $a$.
Note that the right hand side of \eqref{pfaff1} can be rewritten, with recourse to
\eqref{pfaff2} and \eqref{pfaff3}, as a linear 
combination of $g^J$ with rational functions as coefficients.

\section{Inclusion-Exclusion Identity for Faces}\label{sec:I-E}
Let $P$ be the polyhedron defined by \eqref{defP}, 
and suppose the family of the bounding half-spaces for $P$ is given by 
\eqref{bd-spaces}.
Then, we have the following inclusion--exclusion identity \cite{edelsbrunner}: 
\begin{equation}\label{19}
\prod_{j=1}^nH\left(\sum_{i=1}^d\tilde a_{ij}x_i+\tilde b_j\right)
=\sum_{J\in\mathcal F}\prod_{j\in J}
\left(H\left(\sum_{i=1}^d\tilde a_{ij}x_i+\tilde b_j\right)-1\right)
\quad (x\in\mathbf R^d).
\end{equation}
In \cite{Koyama2015}, we showed that 
if the polyhedron $P$ is in general position,
there exists a neighborhood $U$ of the parameter 
$(\tilde a,\tilde b)\in\mathbf R^{d\times n}\times\mathbf R^n$
such that 
\begin{equation}\label{20}
\prod_{j=1}^nH\left(\sum_{i=1}^d a_{ij}x_i+ b_j\right)
=\sum_{J\in\mathcal F}\prod_{j\in J}
\left(H\left(\sum_{i=1}^d a_{ij}x_i+ b_j\right)-1\right)
\end{equation}
holds for any $(a,b)\in U$ and $x\in\mathbf R^d$.
The left hand sides of \eqref{19} and \eqref{20} are 
the indicator functions for the corresponding polyhedra.
In this section we give analogous identities for the indicator functions of 
a face of the polyhedra.

Let $\mathcal F$ be the abstract simplicial complex for the polyhedron $P$.
For $J\in\mathcal F$, 
$$
\mathcal F_J:=\{F\in\mathcal F\mid J\subset F\}.
$$
For parameter $(a,b)\in\mathbf R^{d\times n}\times\mathbf R^n$ 
and $J\in\mathcal F$, we define an affine subspace $V(J,a,b)$ by
\begin{equation}\label{h-plane}
V(J,a,b) = 
\left\{
x\in\mathbf R^d \mid \sum_{i=1}^d a_{ij}x_i+ b_j = 0\, (j\in J)
\right\}.
\end{equation}

\begin{proposition}\label{11}
Suppose the polyhedron $P$ is in general position.
For each $J\in\mathcal F$, we have the equation 
\begin{equation}\label{24}
\prod_{j\in [n]\backslash J} H\left(\sum_{i=1}^d \tilde a_{ij}x_i+ \tilde b_j\right)
=\sum_{F\in\mathcal F_J}\prod_{j\in F\backslash J}
\left(H\left(\sum_{i=1}^d \tilde a_{ij}x_i+ \tilde b_j\right)-1\right).
\end{equation}
for any $x\in V(J,\tilde a,\tilde b)$.
\end{proposition}
\begin{proof}
Let $s$ be the number of elements in the set $J$.
Since the polyhedron $P$ is in general position, we have $s\leq d$.
By replacing the indices, we can assume $J=\{n-s+1,\dots,n\}$ without loss of
generality.
Applying the Euclidean transformation for $P$,
we can assume 
$\tilde a_{ij}=0,\,\tilde b_j=0\,(1\leq i\leq d-s,\,n-s+1\leq j\leq n)$.
Then, by the assumption of general position, 
the vectors $\tilde a_{n-s+1},\dots,\tilde a_n$ are linearly independent
(see, \cite[Corollary 20]{Koyama2015}).
Hence we have 
$$
V(J,\tilde a,\tilde b) = \{x\in\mathbf R^d\mid x_{d-s+1}=\cdots=x_d=0\},
$$
and the problem is reduced to the proof of
\begin{equation}\label{9}
\prod_{j=1}^{n-s} H\left(\sum_{i=1}^{d-s} \tilde a_{ij}y_i+ \tilde b_j\right)
=\sum_{F\in\mathcal F_J}\prod_{j\in F\backslash J}
\left(H\left(\sum_{i=1}^{d-s} \tilde a_{ij}y_i+ \tilde b_j\right)-1\right)
\end{equation}
for arbitrary $y=(y_1,\dots,y_{d-s})^\top\in\mathbf R^{d-s}$.
Let us consider a polyhedron 
$$
P' := 
\left\{
y\in \mathbf R^{d-s} \mid
\sum_{i=1}^{d-s} \tilde a_{ij}y_i+\tilde b_j \geq 0,\, 1\leq j \leq n-s 
\right\}.
$$
Suppose that there are $t$ redundant inequalities in the definition of $P'$.
By replacing indices, we can assume that the redundant inequalities are
$
\sum_{i=1}^{d-s} \tilde a_{ij}y_i+\tilde b_j \geq 0,\, (n-s-t+1\leq j \leq n-s).
$
Then, all facets of $P'$ are given by 
$$
F'_j:=P'\cap
\left\{y\in\mathbf R^{d-s}\mid \sum_{i=1}^{d-s} \tilde a_{ij}y_i+\tilde b_j=0\right\}
\quad (1\leq j\leq n-s-t),
$$
and the abstract simplicial complex for $P'$ is 
$$
\mathcal F'=
\left\{
J'\subset\{1,2,\dots, n-s-t\} \mid \bigcap_{j\in J'}F'_j\neq\emptyset
\right\}.
$$
Applying the inclusion-exclusion identity for $P'$, we have 
\begin{equation}\label{10}
\prod_{j=1}^{n-s-t} H\left(\sum_{i=1}^{d-s} \tilde a_{ij}y_i+ \tilde b_j\right)
=
\sum_{F\in\mathcal F'}\prod_{j\in F}
\left(H\left(\sum_{i=1}^{d-s} \tilde a_{ij}y_i+ \tilde b_j\right)-1\right).
\end{equation}

Since the left hand sides of \eqref{9} and \eqref{10} are both equal to
the indicator function of $P'$, they are equal to each other.
Consequently, we need to show that
the right hand sides of \eqref{9} and \eqref{10} are equal.
It is easy to show that the mapping
$$
\psi:\mathcal F'\rightarrow \mathcal F_J
\quad (J'\mapsto J\cup J')
$$
is a bijection. 
Rewriting the right hand side of \eqref{10} in terms of $\mathcal F_J$,
we have the same expression for the right hand side of \eqref{9}.
\end{proof}

\begin{example}
Suppose $d=2$ and $P=H_1\cap H_2\cap H_3$ where
\begin{align*}
H_1 &= \left\{x\in\mathbf R^2\mid -x_1-x_2+1\geq 0\right\}, \\
H_1 &= \left\{x\in\mathbf R^2\mid x_1\geq 0\right\}, \\
H_1 &= \left\{x\in\mathbf R^2\mid x_2\geq 0\right\},
\end{align*}
then the inclusion--exclusion identity \eqref{19} is 
\begin{align}
H\left(-x_1-x_2+1\right)
H\left(x_1\right)
H\left(x_2\right)
&=
1
+\left(H\left(-x_1-x_2+1\right)-1\right) \label{2Mar2020a}\\&
+\left(H\left(x_1\right)-1\right)
+\left(H\left(x_2\right)-1\right) \nonumber\\&
+\left(H\left(x_1\right)-1\right)\left(H\left(x_2\right)-1\right) \nonumber\\&
+\left(H\left(-x_1-x_2+1\right)-1\right)\left(H\left(x_2\right)-1\right) \nonumber\\&
+\left(H\left(-x_1-x_2+1\right)-1\right)\left(H\left(x_1\right)-1\right) \nonumber
\end{align}
The left hand side is the indicator function of the triangle 
in Figure \ref{fig1}(a),
and each term of the right hand side is the indicator function of
the cone in Figure \ref{fig1}(b).
\ifmyfig
\begin{figure}[hbtp]
\center
\begin{minipage}{0.45\hsize}
\center \includegraphics[width=\hsize]{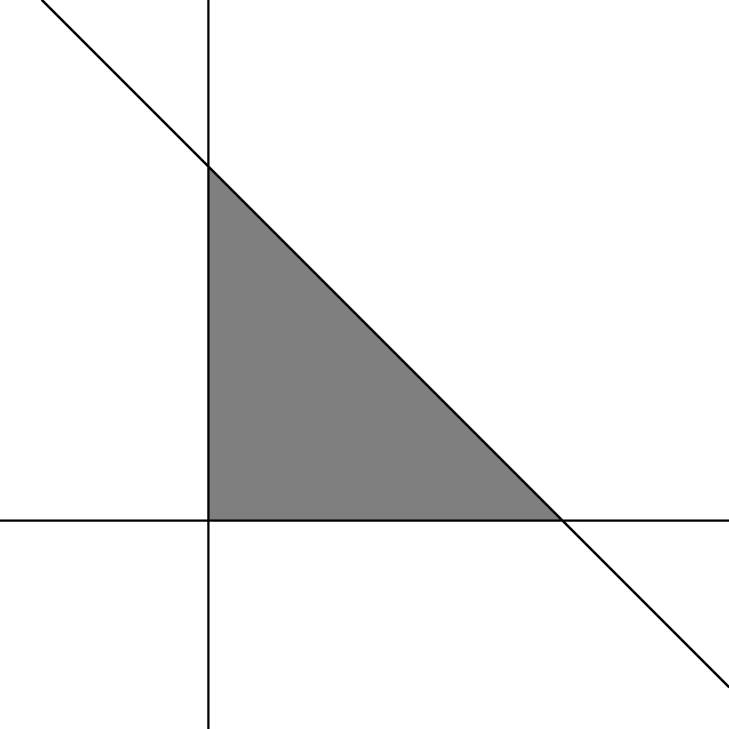}\\(a)
\end{minipage}
\begin{minipage}{0.45\hsize}
\center \includegraphics[width=\hsize]{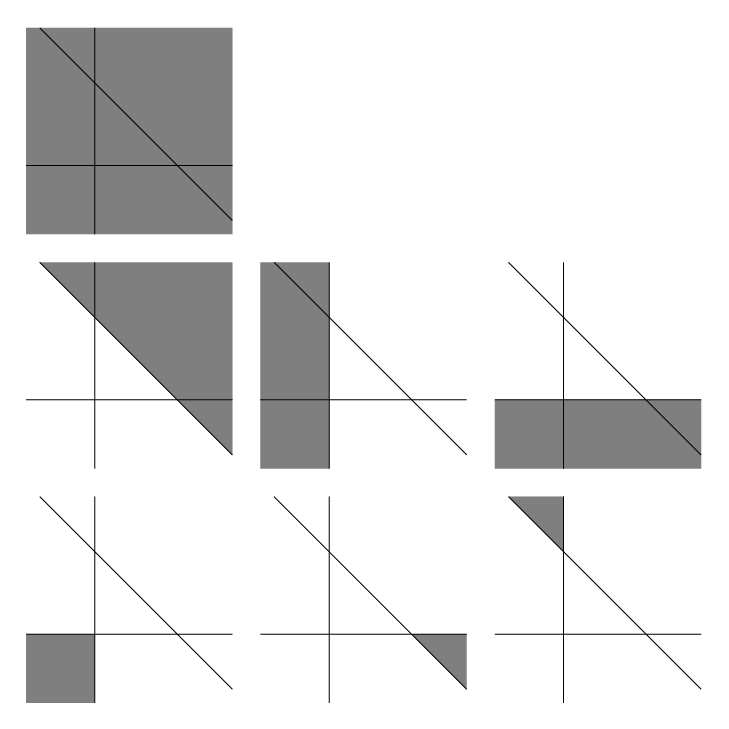}\\(b)
\end{minipage}
\flushleft
    {\small
    Figure \ref{fig1}(a) is the triangle defined by the indicator function on
    the left-hand side of Equation \eqref{2Mar2020a}.
    Figure \ref{fig1}(b) shows the regions represented by the indicator
    functions of the terms on the right-hand side of \eqref{2Mar2020a}.
    }
\center
\caption{An example of the inclusion--exclusion identity}\label{fig1}
\end{figure}
\fi
Let $J=\{1\}$, then the equation \eqref{24} is
\begin{equation}\label{25}
H\left(x_1\right)
H\left(x_2\right)
=1+\left(H\left(x_2\right)-1\right)+\left(H\left(x_1\right)-1\right)
\end{equation}
and this equation holds on $V=\{x\in\mathbf R^2\mid -x_1-x_2+1=0\}$.
On the hyperplane $V$, the left hand side of \eqref{25} expresses 
the indicator function of an edge of the triangle, 
and each term of the right hand side of \eqref{25} is 
the indicator function of the lines in Figure \ref{fig2}.
Note that the all of the vanished terms does not include
$H(-x_1-x_2+1)$.
\ifmyfig
\begin{figure}[hbtp]
\center \includegraphics[width=0.5\hsize]{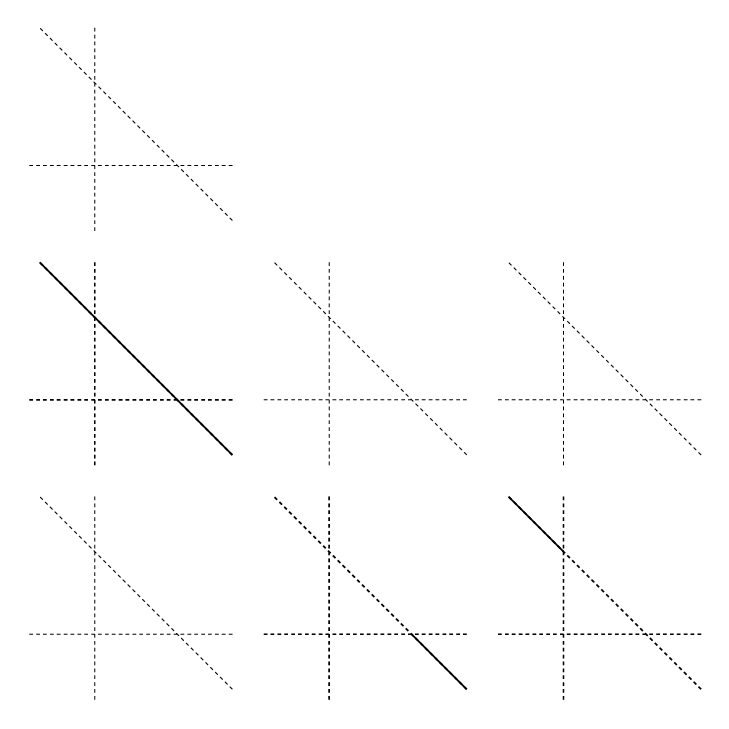}\\
\flushleft
{\small
    The solid line in  Figure \ref{fig2} corresponds to each term on
    the right-hand side of Equation \eqref{25}.
}
\caption{An example of the inclusion--exclusion identity for a face}\label{fig2}
\end{figure}
\fi
\end{example}

For use in the next section, we extend Proposition \ref{11}.
We introduce the following notation.
For parameters $(a,b)\in\mathbf R^{d\times n}\times\mathbf R^n$, let
\begin{align*}
H_j(a,b)
&=\left\{x\in\mathbf R^d \mid \sum_{i=1}^da_{ij}x_i+b_j\geq 0\right\}
\quad \left(1\leq j\leq n\right),\\
\mathcal H(a,b)
&=\left\{ H_1(a,b),\dots, H_n(a,b)\right\},\\
P(a,b)
&= \bigcap_{j=1}^n H_j(a,b),\\
F_j(a,b)
&= \left\{ x\in\mathbf R^d\mid \sum_{i=1}^da_{ij}x_i+b_j=0\right\}\cap P(a,b),\\
\mathcal F(a,b)
&= \left\{ J\subset [n] \mid \bigcap_{j\in J} F_j(a,b)\neq\emptyset\right\}.
\end{align*}
The following lemma holds.
\begin{lemma}\label{12}
Suppose the polyhedron $P$ is in general position.
Then, there exists a neighborhood $U$ of the parameter 
$(\tilde a,\tilde b)\in\mathbf R^{d\times n}\times\mathbf R^n$
which satisfies the following:
for any parameter $(a,b)$ in $U$, $P(a,b)$ is in general position and 
$\mathcal F(a,b)=\mathcal F$ holds.
\end{lemma}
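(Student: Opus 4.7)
I would construct $U$ as the intersection $U_1\cap U_2\cap U_3$, where on $U_1$ the polyhedron $P(a,b)$ is in general position, on $U_2$ we have $\mathcal F\subseteq\mathcal F(a,b)$, and on $U_3$ we have $\mathcal F(a,b)\subseteq\mathcal F$. The openness of each piece relies on a different tool: continuity of minors for $U_1$, the implicit function theorem for $U_2$, and a contradiction argument using Farkas duality for $U_3$.

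For $U_1$, \cite[Corollary 20]{Koyama2015} expresses general position of $P$ as the linear independence of $\{\tilde a_j\}_{j\in J}$ for each $J\in\mathcal F$, i.e.\ the non-vanishing of finitely many minors of $\tilde a$; since these are polynomial (hence continuous) in $(a,b)$, the condition persists on an open neighborhood $U_1$ of $(\tilde a,\tilde b)$. For $U_2$, I pick a relative interior point $\tilde x^J$ of each nonempty face $\bigcap_{j\in J}F_j(\tilde a,\tilde b)$; by general position it satisfies $f_j(\tilde a,\tilde b,\tilde x^J)=0$ for $j\in J$ and $f_j(\tilde a,\tilde b,\tilde x^J)>0$ for $j\in[n]\setminus J$. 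Since the Jacobian of $x\mapsto(f_j(a,b,x))_{j\in J}$ is the transpose of $(a_j)_{j\in J}$ and has full row rank on $U_1$, the implicit function theorem yields a continuous selection $(a,b)\mapsto x^J(a,b)$ solving $f_j(a,b,x^J(a,b))=0$ for $j\in J$ near $(\tilde a,\tilde b)$; continuity of $f_j$ preserves the strict inequalities for $j\notin J$ on a smaller neighborhood $U_2^J$. Setting $U_2:=\bigcap_{J\in\mathcal F}U_2^J$ (a finite intersection) handles (b).

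For $U_3$, the substantive step, I fix $J\notin\mathcal F$ and argue by contradiction. The case $|J|>d$ is automatic: general position forces $V(J,\tilde a,\tilde b)=\emptyset$, and by lower semicontinuity of rank the emptiness persists on a neighborhood. For $|J|\leq d$, suppose a sequence $(a^{(k)},b^{(k)})\to(\tilde a,\tilde b)$ produces points $x_k\in V(J,a^{(k)},b^{(k)})\cap P(a^{(k)},b^{(k)})$. If $\{x_k\}$ has a bounded subsequence, its limit lies in $V(J,\tilde a,\tilde b)\cap P(\tilde a,\tilde b)$ by continuity of equations and inequalities, contradicting $J\notin\mathcal F$. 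If $\|x_k\|\to\infty$, I decompose $x_k=z_k+u_k$ where $z_k$ is the point of $V(J,a^{(k)},b^{(k)})$ closest to the origin (a continuous function of $(a,b)$ by the full-rank property on $U_1$) and $u_k$ lies in the tangent space $\{y:\sum_i a^{(k)}_{ij}y_i=0,\,j\in J\}$. Then $z_k\to z^*\in V(J,\tilde a,\tilde b)$ is bounded while $\|u_k\|\to\infty$, and a subsequential limit of $u_k/\|u_k\|$ gives a unit vector $y^*$ with $\tilde a_j^\top y^*=0$ for $j\in J$ and $\tilde a_j^\top y^*\geq0$ for $j\in[n]\setminus J$. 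Combining $y^*$ with a Farkas certificate for the infeasibility of $V(J,\tilde a,\tilde b)\cap P(\tilde a,\tilde b)=\emptyset$, perturbed to $(a^{(k)},b^{(k)})$ via the implicit function theorem (using the full column rank of $\tilde a_J$), yields the desired contradiction. Intersecting over the finitely many $J\notin\mathcal F$ produces $U_3$.

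\noindent\textbf{Main obstacle.} The delicate step is the unbounded escape in $U_3$, because naive continuity only yields $\tilde a_j^\top y^*\geq0$ rather than the strict positivity needed to translate a reference point of $V(J,\tilde a,\tilde b)$ into $P(\tilde a,\tilde b)$ along the ray $z^*+ty^*$. The fix is to transport the Farkas infeasibility certificate $(\lambda,\mu)$ at $(\tilde a,\tilde b)$ to nearby parameters: general position guarantees the nondegeneracy (full column rank of $\tilde a_J$) required for the implicit function theorem to produce a certificate $(\lambda^{(k)},\mu^{(k)})$ at $(a^{(k)},b^{(k)})$, and the strict inequality part of the certificate is preserved by continuity, so the perturbed certificate directly contradicts the feasibility witnessed by $x_k$.
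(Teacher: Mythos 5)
The route you take is genuinely different from the paper's: the paper disposes of this lemma almost entirely by citation, defining the extended complex $\hat{\mathcal F}(a,b)$ and invoking Theorem~23 of \cite{Koyama2015} to get that $\{(a,b): P(a,b)\text{ in general position},\ \hat{\mathcal F}(a,b)=\hat{\mathcal F}(\tilde a,\tilde b)\}$ is already a neighborhood, then using Corollaries~19 and~22 of \cite{Koyama2015} to pass from $\hat{\mathcal F}$ to $\mathcal F$ and to the bounding half-spaces. Your $U_2$ step (lower semicontinuity $\mathcal F\subseteq\mathcal F(a,b)$ via a relative-interior point of each face and the implicit function theorem) is correct and is essentially the content hidden inside the cited results. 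There is also a mild circularity in $U_1$: general position of $P(a,b)$ is a condition on the subsets $J\in\mathcal F(a,b)$, not on $\mathcal F$, so $U_1$ cannot be certified independently of $U_3$; the paper handles exactly this point at the end of its proof (deducing from $\mathcal F(a,b)=\mathcal F$ that every $F_j(a,b)$ is a facet, so that $\mathcal H(a,b)$ really is the family of bounding half-spaces). This is fixable by reordering, not a fatal flaw.

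The genuine gap is in $U_3$. For $J\notin\mathcal F$ with $|J|\leq d$, the Farkas certificate of infeasibility of $\{f_j(\tilde a,\tilde b,x)=0\ (j\in J),\ f_j(\tilde a,\tilde b,x)\geq 0\ (j\in[n])\}$ is a vector $c$ with $\sum_j c_j\tilde a_j=0$, and its support $S$ is in general \emph{strictly larger} than $J$; indeed, if all $d$-subsets of columns of $\tilde a$ are independent, then necessarily $|S|\geq d+1$. So "full column rank of $\tilde a_J$" is not the hypothesis under which the certificate can be transported: what you need is that $\ker\bigl((a_j)_{j\in S}\bigr)$ varies continuously, i.e.\ that $\mathrm{rank}\,\tilde a_S$ is locally constant (equivalently, equal to $d$), after which you project $c$ onto the perturbed kernel and observe that strict positivity on $\mathrm{supp}(\mu)$ and the inequality $c^\top b<0$ survive. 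Moreover, Corollary~20 of \cite{Koyama2015} as used in this paper only guarantees linear independence of $\{\tilde a_j\}_{j\in J}$ for $J\in\mathcal F$, so the rank conditions you need for $J\notin\mathcal F$ (and for the $|J|>d$ case, where you assert $V(J,\tilde a,\tilde b)=\emptyset$) are not supplied by what you cite; they require the stronger "hyperplane arrangement in general position" reading of the definition in \cite{Koyama2015}. Finally, note that once the certificate transport is set up correctly it proves infeasibility of the perturbed system outright, so the entire bounded/unbounded sequence dichotomy and the recession direction $y^*$ are superfluous; as you yourself observe, that part of the argument cannot close the gap on its own because it only yields $\tilde a_j^\top y^*\geq 0$.
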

\begin{proof}
We put $a_{i0}=0\,(1\leq i\leq d)$, $b_0=0$, and 
\begin{align*}
\hat F_j(a,b)
&=\left\{
(x_0,x)\in\mathbf R\times\mathbf R^d 
\mid
\begin{array}{c} 
\sum_{i=1}^d a_{ij}x_i+b_j= 0,\\
\sum_{i=1}^d a_{ik}x_i+b_k\geq 0\\
(0\leq k\leq n)
\end{array}
\right\}
\quad (0\leq j\leq n)\\
\hat{\mathcal F}(a,b)
&=\left\{
J\subset\{0,1,\dots,n\}\mid \bigcap_{j\in J}\hat F_j(a,b)\neq\emptyset
\right\}.
\end{align*}
By Theorem 23 in \cite{Koyama2015}, the set 
$$
U:=\left\{
(a,b)\in\mathbf R^{d\times n}\times R^n \mid
\begin{array}{c}
\text{$P(a,b)$ is in general position},\\
\hat{\mathcal F}(a,b) = \hat{\mathcal F}(\tilde a,\tilde b)
\end{array}
\right\}
$$
is a neighborhood of the point 
$(\tilde a,\tilde b)\in\mathbf R^{d\times n}\times R^n$.
Consider arbitrary $(a,b)\in U$,
from Corollary 19 in \cite{Koyama2015}, we have
$\mathcal F(a,b)=\mathcal F(\tilde a,\tilde b)=\mathcal F$
from $\hat{\mathcal F}(a,b) = \hat{\mathcal F}(\tilde a,\tilde b)$.
By Lemma 22 in \cite{Koyama2015},
all facets of  $P(a,b)$ are given by 
$F_j(a,b)\,\left(\{j\}\in \mathcal F(a,b)\right)$. 
The equation $\mathcal F(a,b)=\mathcal F$ implies 
$\{j\}\in \mathcal F(a,b)$ for all $j\in[n]$.
Consequently, $\mathcal H(a,b)$ is the bounding half-spaces for $P(a,b)$
and $P(a,b)$ is in general position.
\end{proof}
Finally, we have the following.
\begin{theorem}\label{14}
Suppose the polyhedron $P$ is in general position and $J\in\mathcal F$.
There exists a neighborhood $U$ of 
$(\tilde a,\tilde b)\in\mathbf R^{d\times n}\times\mathbf R^n$
such that for any $(a,b)\in U$ and $x\in V(J, a, b)$, we have 
\begin{equation}\label{21}
\prod_{j\in [n]\backslash J} H\left(\sum_{i=1}^d a_{ij}x_i+ b_j\right)
=\sum_{F\in\mathcal F_J}\prod_{j\in F\backslash J}
\left(H\left(\sum_{i=1}^d a_{ij}x_i+ b_j\right)-1\right).
\end{equation}
\end{theorem}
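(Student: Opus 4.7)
The plan is to combine Lemma \ref{12} with Proposition \ref{11} in a straightforward way. Lemma \ref{12} already handles the nontrivial part, namely the construction of a neighborhood in which the combinatorial type of the polyhedron is preserved; Proposition \ref{11} then supplies the pointwise identity for each individual parameter in that neighborhood.

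First I would invoke Lemma \ref{12} to fix a neighborhood $U$ of $(\tilde a,\tilde b)$ with the property that for every $(a,b)\in U$ the polyhedron $P(a,b)$ is in general position and its abstract simplicial complex $\mathcal F(a,b)$ coincides with $\mathcal F$. In particular, for such $(a,b)$, the family $\mathcal H(a,b)=\{H_1(a,b),\dots,H_n(a,b)\}$ is exactly the set of bounding half-spaces of $P(a,b)$, which is the set-up needed to apply Proposition \ref{11}.

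Next, for an arbitrary $(a,b)\in U$, I would apply Proposition \ref{11} to the polyhedron $P(a,b)$ with $(a,b)$ playing the role formerly played by $(\tilde a,\tilde b)$. The proof of Proposition \ref{11} only used that the polyhedron is in general position and that its defining inequalities give its bounding half-spaces; no property specific to the reference parameter $(\tilde a,\tilde b)$ was used. Therefore, for any $J\in\mathcal F=\mathcal F(a,b)$ and any $x\in V(J,a,b)$, Proposition \ref{11} yields the identity \eqref{21} with the sum ranging over $\mathcal F(a,b)_J$. Substituting the equality $\mathcal F(a,b)=\mathcal F$ converts this index set into $\mathcal F_J$ and produces the formula asserted in the theorem.

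The only real obstacle is the bookkeeping needed to recognise that $\mathcal F_J$ in the statement is the same complex that Proposition \ref{11} would produce when applied to the perturbed polyhedron $P(a,b)$; this is precisely what Lemma \ref{12} delivers. Once this identification is in place, Theorem \ref{14} is essentially a uniform-in-parameter restatement of Proposition \ref{11}.
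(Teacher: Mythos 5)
Your proposal is correct and matches the paper's own proof: both take the neighborhood $U$ from Lemma \ref{12}, apply Proposition \ref{11} to the perturbed polyhedron $P(a,b)$ (noting $J\in\mathcal F=\mathcal F(a,b)$), and then use $\mathcal F(a,b)=\mathcal F$ to identify $\mathcal F_J(a,b)$ with $\mathcal F_J$. No gaps; this is essentially the same argument.
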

\begin{proof}
Let $U$ be a neighborhood of $(\tilde a, \tilde b)$ in Lemma \ref{12}.
Then the polyhedron $P(a,b)$ is in general position.
By Lemma 22 in \cite{Koyama2015}, the abstract simplicial complex 
associated with $P(a,b)$ is equivalent to $\mathcal F(a,b)$.
The equation $\mathcal F(a,b)=\mathcal F$ implies $J\in\mathcal F(a,b)$.
Hence, we can apply Proposition \ref{11} which gives
$$
\prod_{j\in [n]\backslash J} H\left(\sum_{i=1}^d a_{ij}x_i+ b_j\right)
=\sum_{F\in\mathcal F_J(a,b)}\prod_{j\in F\backslash J}
\left(H\left(\sum_{i=1}^d a_{ij}x_i+ b_j\right)-1\right)
$$
for any $x\in V(J, a, b)$.
Here, we put $\mathcal F_J(a,b)=\{F\subset\mathcal F(a,b)\mid J\subset F\}$.
Since $\mathcal F(a,b)=\mathcal F$ implies 
$\mathcal F_J(a,b)=\mathcal F_J$, we thus have equation \eqref{21}.
\end{proof}

\section{Derivatives of the Probability Content}\label{sec:diff}
In this section we derive an expression for the function $g^J(a,b)\,(J\in\mathcal F)$,
which is a derivative of the function $\varphi(a,b)$ defined by the probability 
content of the polyhedron $P$ for the multivariate normal distribution.
We then show that the function $g^J(a,b)$ can be expressed as 
an integral on the affine subspace \eqref{h-plane}.
For simplicity, we put 
\begin{equation}\label{6}
\varphi_F(a,b)=
\int_{\mathbf R^d}
\exp\left(-\frac{1}{2}\sum_{i=1}^dx_i^2\right)
\prod_{j\in F} H(-f_j(a,b,x))
dx.
\end{equation}
Then, the function $\varphi(a,b)$ in \eqref{eq:varphi} can be written as 
$$
\varphi(a,b)
 = \sum_{F\in\mathcal F}\frac{(-1)^{|F|}}{(2\pi)^{d/2}}\varphi_F(a,b).
$$
In order to obtain expressions for the function $g^J(a,b)$, 
we first consider $\pd{b}^J\bullet\varphi_F(a,b)$ for $F\in\mathcal F$.
Consider the following case:
\begin{lemma}\label{lem:1}
Let $1\leq p\leq q \leq d$, $J=\left\{1,\dots, p\right\}$, and 
$F=\left\{1,\dots, p,\dots,q\right\}$.
Suppose that parameter $a$ satisfies $a_{ij}=0\,(p<i\leq d,\,1\leq j \leq p)$
and $\alpha_F(a)=\left(\sum_{k=1}^da_{ki}a_{kj}\right)_{i,j\in F}$ is a regular
matrix, i.e., 
$$
a=(a_{ij})=
\begin{pmatrix}
a_{11}&\cdots&a_{1p}&a_{1(p+1)}&\cdots&a_{1q}&*&\cdots&*\\
\vdots& & \vdots &\vdots& & \vdots & \vdots& & \vdots\\
a_{p1} & \cdots & a_{pp}&a_{p(p+1)}&\cdots&a_{pq}   &  *   & \cdots & *   \\
0 & \cdots     &  0  &a_{(p+1)(p+1)}&\cdots&a_{(p+1)q} & *   & \cdots & *\\   
\vdots & & \vdots & \vdots   & & \vdots \\   
0 & \cdots     &  0  &a_{d(p+1)}&\cdots&a_{dq} & *   & \cdots & *
\end{pmatrix}
$$
and the vectors $a_1,\dots,a_p,\dots,a_q$ are linearly independent.
Then, the function $\pd{b}^J\bullet\varphi_F(a,b)$ is equal to the integral
\begin{equation}\label{4}
\frac{(-1)^{|J|}}{\sqrt{|\alpha_J(a)|}}
\int_{V(J,a,b)} 
\exp\left(-\frac{1}{2}\sum_{i=1}^dx_i^2\right)
\prod_{j\in F\backslash J} H(-f_j(a,b,x))
\mu(dx).
\end{equation}
Here, $\mu$ is the volume element of the affine subspace $V(J,a,b)$.
\end{lemma}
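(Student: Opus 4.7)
The plan is as follows. The entire lemma boils down to differentiating Heaviside functions and performing a linear change of variables that is especially clean because of the block structure imposed on $a$.

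First I would carry out the differentiation formally. Since $f_j(a,b,x)=\sum_i a_{ij}x_i+b_j$, we have $\partial_{b_j} H(-f_j(a,b,x))=-\delta(f_j(a,b,x))$, so applying $\partial_b^J=\prod_{j=1}^p\partial_{b_j}$ to the integrand of $\varphi_F(a,b)$ and differentiating under the integral gives
\[
\partial_b^J\bullet\varphi_F(a,b)=(-1)^{|J|}\int_{\mathbf R^d}\exp\Bigl(-\tfrac12\sum_{i=1}^d x_i^2\Bigr)\prod_{j\in J}\delta(f_j(a,b,x))\prod_{j\in F\setminus J}H(-f_j(a,b,x))\,dx.
\]
To make this rigorous (this is the main analytic obstacle) I would either approximate $H$ by a sequence of smooth mollifications $H_\varepsilon$, check dominated convergence in both $\varphi_F$ and its $b$-derivative, and let $\varepsilon\to 0$; or appeal to the fact (already used in \cite{Koyama2015}) that the integrand is a product of tempered distributions whose wavefront sets are transverse so that the pointwise product and the $b$-derivative commute.

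Next I would exploit the block-triangular hypothesis on $a$. For $j\in J=\{1,\dots,p\}$ we have $a_{ij}=0$ for $i>p$, so $f_j(a,b,x)=\sum_{i=1}^p a_{ij}x_i+b_j$ depends only on $(x_1,\dots,x_p)$. Let $A_J$ be the $p\times p$ matrix $(a_{ij})_{1\le i,j\le p}$; by the block form and the linear independence of $a_1,\dots,a_q$ the matrix $A_J$ is non-singular, and moreover $\alpha_J(a)=A_J^\top A_J$, so $\sqrt{|\alpha_J(a)|}=|\det A_J|$. Performing the change of variables $y_j=f_j(a,b,x)$ on the first $p$ coordinates (keeping $x_{p+1},\dots,x_d$ fixed) introduces the Jacobian $|\det A_J|$, and then $\prod_{j=1}^p\delta(y_j)\,dy_1\cdots dy_p$ collapses the integration over the $y$-variables and pins $(x_1,\dots,x_p)$ to the unique solution of $f_j(a,b,x)=0$ $(j\in J)$.

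Finally I would identify what remains as the stated surface integral. Under our assumption, $V(J,a,b)$ is the affine subspace obtained by fixing $(x_1,\dots,x_p)$ at this solution and letting $(x_{p+1},\dots,x_d)$ run over $\mathbf R^{d-p}$; because $V(J,a,b)$ is a translate of $\{0\}^p\times\mathbf R^{d-p}$, its induced Euclidean volume element $\mu$ coincides with $dx_{p+1}\cdots dx_d$. Assembling the sign $(-1)^{|J|}$ from the differentiation, the factor $1/|\det A_J|=1/\sqrt{|\alpha_J(a)|}$ from the Jacobian, and recognising that the Gaussian weight and the remaining Heaviside factors $\prod_{j\in F\setminus J}H(-f_j(a,b,x))$ depend only on the restriction of $x$ to $V(J,a,b)$, I obtain exactly the expression \eqref{4}. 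The only real subtlety, as noted above, is the justification of differentiating through the Heaviside factors; once that is dispatched, the rest is a direct change of variables.
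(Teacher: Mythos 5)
Your computation coincides with the paper's in all its essentials: the same block-adapted linear change of variables, the same Jacobian identity $\sqrt{|\alpha_J(a)|}=|\det A_J|$, and the same observation that $V(J,a,b)$ is a translate of $\{0\}^p\times\mathbf R^{d-p}$, so that $\mu$ reduces to $dx_{p+1}\cdots dx_d$. The one genuine difference is the order of operations at the step you rightly call the main analytic obstacle. You differentiate first, producing $\prod_{j\in J}\delta(f_j)$, and then offer two possible ways (mollification, or wavefront sets) to legitimise this. The paper instead changes variables \emph{first}, setting $y_j=\sum_i u_{ij}x_i$ with $u_{ij}=a_{ij}$ for $j\le p$ and $u_{ij}=\delta_{ij}$ for $j>p$, so that $y_j=f_j(a,b,x)-b_j$ for $j\in J$ and
\[
\varphi_F(a,b)=\int_{-\infty}^{-b_1}\cdots\int_{-\infty}^{-b_p}G(y_1,\dots,y_p;a,b,U)\,dy_p\cdots dy_1,
\]
with $b_1,\dots,b_p$ entering only through the upper limits of integration ($G$ depends on $b$ only via $b_{p+1},\dots,b_q$). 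The derivative $\partial_b^J$ is then just the fundamental theorem of calculus applied to the continuous function $G$: no distributions and no limiting procedure are needed, and the sign $(-1)^{|J|}$ and the evaluation on $V(J,a,b)$ drop out automatically. Your route can be completed, but to pass to the limit in the mollified integrals you would need precisely the continuity of the inner integral $G$ together with a local-uniformity estimate to exchange limit and derivative --- the same fact the paper uses, with extra overhead --- and as written, leaving both justifications as unexecuted options is the only real gap. One small omission: the case $p=d$ deserves a separate sentence (the paper gives it one), since there $V(J,a,b)$ is a single point and the ``surface integral'' degenerates to a point evaluation; your argument still gives the right answer, but the phrase ``letting $(x_{p+1},\dots,x_d)$ run over $\mathbf R^{d-p}$'' is vacuous in that case.
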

\begin{proof}
Let a $d\times d$ matrix $U=(u_{ij})$ be 
$$
U=
\begin{pmatrix}
a_{11} & \cdots & a_{1p}   &  0   & \cdots & 0   \\
\vdots&        & \vdots & \vdots& & \vdots\\
a_{p1} & \cdots & a_{pp}   &  0   & \cdots & 0   \\
0 & \cdots     &  0   & 1   & & \\   
\vdots & & \vdots &   &\ddots &  \\   
0 & \cdots     &  0   &    &  & 1
\end{pmatrix}.
$$
The elements of $U$ can be written as 
$$
u_{ij}=
\begin{cases}
a_{ij} & (1\leq j\leq p)\\
\delta_{ij} & (p<j\leq d)
\end{cases}.
$$
Here, $\delta_{ij}$ is Kronecker's delta.
As the vectors $a_1,\dots,a_p$ are linearly independent,
the matrix $U$ is regular, and we have $|U|^2=|\alpha_J(a)|$.
We denote the inverse matrix of $U$ by $U^{-1}=(u^{ij})$.
Consider a transformation of variables
$
y_j = \sum_{i=1}^d u_{ij}x_i\,(1\leq j\leq d).
$
By the relationships
\begin{align*}
x_i&=
\begin{cases}
\sum_{k=1}^p u^{ki}y_k & (1\leq i\leq p)\\
y_i                 & (p+1\leq i\leq d)
\end{cases},\\
y_j&=\sum_{i=1}^da_{ij}x_i+b_j\quad (1\leq j\leq p),
\end{align*}
the integral $\varphi_F(a,b)$ can be written as 
\begin{align*}
&
\frac{1}{\sqrt{|\alpha_J(a)|}}
\int_{\mathbf R^d}
e^{-\frac{1}{2}\sum_{i=1}^p\left(\sum_{k=1}^pu^{ki}y_k\right)^2-\frac{1}{2}\sum_{i=p+1}^dy_i^2}
\prod_{j=1}^p H\left(-y_j-b_j\right)\\
&\quad\times
\prod_{j=p+1}^q H\left(
-\sum_{i=1}^p\sum_{k=1}^p a_{ij}u^{ki}y_k-\sum_{i=p+1}^da_{ij}y_i-b_j
\right)
dy_1\dots dy_d\\
&=
\int_{-\infty}^{-b_1}\cdots\int_{-\infty}^{-b_p}
G(y_1,\dots, y_p; a, b, U)dy_p\dots dy_1.
\end{align*}
Here, we put 
\begin{align*}
&G(y_1,\dots, y_p; a, b, U)\\
&=\frac{e^{-\frac{1}{2}\sum_{i=1}^p\left(\sum_{k=1}^pu^{ki}y_k\right)^2}}{\sqrt{|\alpha_J(a)|}}
\int_{-\infty}^\infty\cdots\int_{-\infty}^\infty
e^{-\frac{1}{2}\sum_{i=p+1}^dy_i^2}
\\&\quad\times
\prod_{j=p+1}^q H\left(
-\sum_{i=1}^p\sum_{k=1}^pa_{ij}u^{ki}y_k-\sum_{i=p+1}^da_{ij}y_i-b_j
\right)
dy_{p+1}\dots dy_d.
\end{align*}
Since $G(y_1,\dots, y_p; a, b, U)$ is a continuous function
with respect to $y_1,\dots, y_p$,
we have 
$\pd{b}^J\bullet\varphi_F(a,b) = (-1)^pG(-b_1,\dots, -b_p; a, b, U)$.

When $p=d$, we have
$$
\pd{b}^J\bullet\varphi_F(a,b)
=
\frac{(-1)^d}{\sqrt{|\alpha_J(a)|}}
e^{-\frac{1}{2}\sum_{i=1}^d\left(\sum_{k=1}^d u^{ki}b_k\right)^2}
$$
Since $-(U^{-1})^\top b$ is the unique point in $V(J,a,b)$, 
$\pd{b}^J\bullet\varphi_F(a,b)$ equals to \eqref{4}.

Suppose $p\neq  d$,
and define a mapping $\psi(x)=(y_{p+1},\dots,y_d)$ for
the affine subspace $V(J,a,b)$ to $\mathbf R^{d-p}$ by $y_j = x_j\,(p+1\leq j\leq d)$,
then this mapping is a local coordinate system on $V(J,a,b)$. 
At this coordinate, the functions $x_1,\dots,x_d$ on $V(J,a,b)$ can be written as 
$$
x_i =
\begin{cases}
-\sum_{k=1}^p u^{ki}b_k & (1\leq i\leq p),\\
y_i & (p+1\leq i\leq d).
\end{cases}
$$
A Riemannian metric can be written as a tensor product of 
1-forms (see, e.g., \cite{helgason1978}), 
and the Riemannian metric induced on $V(J,a,b)$ is 
$\sum_{i=1}^d dx_i\otimes dx_i= \sum_{j=p+1}^d dy_j\otimes dy_j$.
Calculating \eqref{4} with this coordinate, 
we have \\
$ g^J(a,b) = (-1)^p G(-b_1, \dots, -b_p; a, b, U)$.
\end{proof}

We now extend this lemma.
\begin{lemma}\label{lem:2}
Let $1\leq p\leq q\leq d$, $J=\left\{1,\dots, p\right\}$, and 
$F=\left\{1,\dots, p,\dots,q\right\}$.
Suppose $\alpha_F(a)$ is a regular matrix. Then, the function 
$\pd{b}^J\bullet\varphi_F(a,b)$ is equal to \eqref{4}.
\end{lemma}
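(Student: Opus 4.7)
My plan is to reduce Lemma \ref{lem:2} to Lemma \ref{lem:1} by applying an orthogonal change of variables on $\mathbf R^d$ that rotates $a_1,\dots,a_p$ into the span of the first $p$ coordinate vectors. Since an orthogonal map is an isometry of $\mathbf R^d$, it preserves the standard Gaussian density, leaves every Gram matrix $\alpha_F(a)$ invariant, and pulls back the intrinsic Riemannian volume element on any affine hyperplane in the obvious way. Consequently the structural hypothesis of Lemma \ref{lem:1} can be engineered without altering any of the quantities appearing in \eqref{4}.

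First I would note that $\alpha_J(a)$ is positive definite: the regularity of $\alpha_F(a)$ forces it to be the Gram matrix of the linearly independent vectors $a_1,\dots,a_q$, so its principal submatrix $\alpha_J(a)$ is also positive definite and in particular invertible, which guarantees the prefactor $1/\sqrt{|\alpha_J(a)|}$ in \eqref{4} is well defined. Then I would apply the QR decomposition to the $d\times p$ matrix with columns $a_1,\dots,a_p$ to obtain an orthogonal matrix $Q\in O(d)$ such that $Q^\top a_j\in\mathrm{span}(e_1,\dots,e_p)$ for every $1\leq j\leq p$. Defining $a'$ to be the matrix whose $j$-th column is $Q^\top a_j$, the parameter $(a',b)$ satisfies the block-zero condition $a'_{ij}=0$ for $p<i\leq d$, $1\leq j\leq p$, and the linear independence hypothesis of Lemma \ref{lem:1} is inherited from that of $a$; moreover $\alpha_F(a')=\alpha_F(a)$.

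The change of variables $x=Qy$ has unit Jacobian and satisfies $\sum_{i=1}^d x_i^2=\sum_{i=1}^d y_i^2$ together with $f_j(a,b,Qy)=f_j(a',b,y)$, so $\varphi_F(a,b)=\varphi_F(a',b)$, and therefore $\pd{b}^J\bullet\varphi_F(a,b)=\pd{b}^J\bullet\varphi_F(a',b)$. Lemma \ref{lem:1} applied to the rotated parameter expresses this common value as the integral \eqref{4} with $a$ replaced by $a'$. To transfer the formula back to the original parameter I would use that $V(J,a',b)=Q^\top V(J,a,b)$ and that $Q$ is an ambient isometry (so the intrinsic volume element matches under the induced bijection), that the Gaussian weight is rotation invariant, that the Heaviside factors transform back via $f_j(a',b,Q^\top x)=f_j(a,b,x)$ for $j\in F\setminus J$, and that $|\alpha_J(a')|=|\alpha_J(a)|$. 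Assembling these identifications recovers \eqref{4} for $a$ itself.

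There is no deep obstacle here; the care required lies in the geometric bookkeeping. The only step meriting attention is the verification that the pullback under the isometry $Q$ of the intrinsic volume element of $V(J,a',b)$ is exactly the intrinsic volume element of $V(J,a,b)$, so that the prefactor $1/\sqrt{|\alpha_J(a)|}$ supplied by Lemma \ref{lem:1} for the rotated parameter transfers unchanged to $a$. This is immediate once one identifies $\mu$ with the measure induced by the ambient Euclidean metric rather than via a parameter-dependent coordinate expression, but it is the only nontrivial point in the reduction.
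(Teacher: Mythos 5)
Your proposal is correct and follows essentially the same route as the paper: reduce to Lemma \ref{lem:1} by an orthogonal rotation $a'=Ra$ that zeroes out the lower block of the first $p$ columns, use rotation invariance of Lebesgue measure to get $\varphi_F(a,b)=\varphi_F(a',b)$ and $\alpha_F(a)=\alpha_F(a')$, and then transfer the face integral \eqref{4} back to the original parameter. The only difference is that where you invoke the general fact that an ambient isometry identifies the intrinsic volume elements of $V(J,a',b)$ and $V(J,a,b)$, the paper verifies this by an explicit choice of local coordinates and a direct computation of the induced Riemannian metric on both hyperplanes; your version is more conceptual but establishes the same point.
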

\begin{proof}
It is sufficient to show that this reduces to Lemma \ref{lem:1}.

For a suitable special orthogonal matrix $R$, the $d\times n$ matrix $a':=Ra$
satisfies the condition $a'_{ij}=0\,(p<i\leq d,\,1\leq j\leq p)$.
Since  $\alpha_F(a)=\alpha_F(a')$,
$\alpha_F(a')$ is also a regular matrix by this assumption.
Hence, the parameter $(a',b)$ satisfies the condition of Lemma \ref{lem:1}.

Since the Lebesgue measure is invariant under the action of the special orthogonal
group, we have $\varphi_F(a,b)=\varphi_F(a',b)$ for any $b\in\mathbf R^n$.
Consequently, we have 
$\pd{b}^J\bullet \varphi_F(a,b)=\pd{b}^J\bullet \varphi_F(a',b)$,
and $1/\sqrt{|\alpha_J(a)|}=1/\sqrt{|\alpha_J(a')|}$.

Considering \eqref{4},  we put 
$$
\tilde\varphi_F(a,b)=
\int_{V(J,a,b)} 
\exp\left(-\frac{1}{2}\sum_{i=1}^dx_i^2\right)
\prod_{j\in F\backslash J} H(-f_j(a,b,x))
\mu(dx).
$$
We need to show $\tilde\varphi_F(a,b)=\tilde\varphi_F(a',b)$.
When $p=d$, this relation is trivial.
Suppose that $p<d$.
Take vectors $v_j=(v_{1j},\dots,v_{dj})^\top\,(q+1\leq j\leq d)$ 
such that $a_1,\dots,a_q,v_{q+1},\dots, v_d$ are linearly independent.
Let $U=(u_{ij})$ be a matrix obtained by arranging these vectors,
$$
u_{ij}=
\begin{cases}
a_{ij} & (1\leq j\leq q),\\
v_{ij} & (q+1\leq j\leq d).
\end{cases}
$$
We denote the inverse matrix of $U$, by $U^{-1}=(u^{ij})$.
We define a matrix $U'=(u'_{ij})$ as $U'=RU$, and denote its inverse by $U'^{-1}=(u'^{ij})$.

First, we calculate $\tilde\varphi_F(a,b)$.
If we define a map $\psi(x)=(y_{p+1},\dots,y_d)$ from the affine subspace $V(J,a,b)$
to $\mathbf R^{d-p}$ by $y_j=\sum_{i=1}^d u_{ij}x_i\,(p+1\leq j\leq d)$, then
it is a local coordinate system on $V(J,a,b)$. 
With this coordinate, the function $x_i$ on $V(J,a,b)$ can be written as 
$$
x_i=-\sum_{k=1}^pu^{ki}b_k+\sum_{k=p+1}^du^{ki}y_k
\quad (1\leq i\leq d).
$$
Hence, the Riemannian metric on the affine subspace $V(J,a,b)$ is 
$$
\sum_{i=1}^ddx_i\otimes dx_i 
=
\sum_{k=1}^p\sum_{\ell=1}^p\left(
 \sum_{i=1}^du^{ki}u^{\ell i}
\right)dy_k\otimes dy_\ell.
$$
Let $D$ be the determinant of the matrix 
$\left( \sum_{i=1}^du^{ki}u^{\ell i}\right)_{1\leq k,\ell\leq p}$.
The integral $\tilde\varphi_F(a,b)$ can be written as
$$
\frac{1}{\sqrt{|D|}}
\int_{\mathbf R^{d-p}}
e^{
  -\frac{1}{2}\sum_{i=1}^d
  \left(
    -\sum_{k=1}^p u^{ki}b_k+\sum_{k=p+1}^d u^{ki}y_k
  \right)^2
}
\prod_{j=p+1}^q H(-y_j-b_j)
\prod_{j=p+1}^ddy_j.
$$

Next, we calculate $\tilde\varphi_F(a',b)$.
If we define a map $\psi'(x)=(y_{p+1},\dots,y_d)$ from the affine subspace $V(J,a',b)$
to $\mathbf R^{d-p}$ by $y_j=\sum_{i=1}^d u'_{ij}x_i\,(p+1\leq j\leq d)$,
then it is a local coordinate system on $V(J,a',b)$. 
With this coordinate, the function $x_i$ on $V(J,a',b)$ can be written as 
$$
x_i=-\sum_{k=1}^pu'^{ki}b_k+\sum_{k=p+1}^du'^{ki}y_k
\quad (1\leq i\leq d).
$$
By $U'^{-1}=U^{-1}R^\top$, the Riemannian metric on $V(J,a',b)$ is  
$$
\sum_{i=1}^ddx_i\otimes dx_i 
=
\sum_{k=1}^p\sum_{\ell=1}^p\left(
 \sum_{i=1}^du'^{ki}u'^{\ell i}
\right)dy_k\otimes dy_\ell
=
\sum_{k=1}^p\sum_{\ell=1}^p\left(
 \sum_{i=1}^du^{ki}u^{\ell i}
\right)dy_k\otimes dy_\ell ,
$$
and we have 
\begin{align*}
\sum_{i=1}^d x_i^2
&=
\sum_{i=1}^d
  \left(
    -\sum_{k=1}^p u'^{ki}b_k+\sum_{k=p+1}^d u'^{ki}y_k
  \right)^2\\
&=
\sum_{i=1}^d
  \left(
    -\sum_{k=1}^p u^{ki}b_k+\sum_{k=p+1}^d u^{ki}y_k
  \right)^2.
\end{align*}
Hence we have $\tilde\varphi_F(a',b)=\tilde\varphi_F(a,b)$.
\end{proof}
From Lemma \ref{lem:2}, we have the following.
\begin{lemma}\label{lem:3}
Let $F\in\mathcal F$ and suppose $\alpha_F(a)$ is a regular matrix.
Then, we have
\begin{align*}
&\pd{b}^J\bullet\varphi_F(a,b)\\
&=
\begin{cases}
\frac{(-1)^{|J|}}{\sqrt{|\alpha_J(a)|}}
\int_{V(J,a,b)} 
e^{-\frac{1}{2}\sum_{i=1}^dx_i^2}
\prod_{j\in F\backslash J} H(-f_j(a,b,x))
\mu(dx)
& (J\subset F),\\
0  & (J\not\subset F).
\end{cases}
\end{align*}
\end{lemma}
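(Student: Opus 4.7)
The plan is to split the argument into the two cases dictated by the statement and handle each by a short reduction. The case $J\not\subset F$ will vanish because $\varphi_F$ does not actually depend on the relevant $b_j$; the case $J\subset F$ will follow from Lemma \ref{lem:2} after relabeling indices.

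First, suppose $J\not\subset F$ and pick $j_0\in J\setminus F$. In the integrand of $\varphi_F(a,b)$ defined by \eqref{6}, the variable $b_{j_0}$ does not occur, since each $b_j$ enters only through $f_j(a,b,x)$ and the product runs over $j\in F$ only. Therefore $\pd{b_{j_0}}\varphi_F(a,b)=0$, and since the $\pd{b_j}$ commute, $\pd{b}^J\bullet\varphi_F(a,b)=0$.

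Next, assume $J\subset F$ and set $p=|J|$, $q=|F|$. Choose a permutation $\sigma$ of $\{1,\dots,n\}$ that sends $J$ onto $\{1,\dots,p\}$ and $F$ onto $\{1,\dots,q\}$. Define relabelled parameters $a'$ and $b'$ by $a'_{ij}=a_{i,\sigma^{-1}(j)}$ and $b'_j=b_{\sigma^{-1}(j)}$. Because reordering the factors of a product and relabeling the components of the parameters do not change the value of the integral, $\varphi_F(a,b)=\varphi_{\{1,\dots,q\}}(a',b')$, and the differential operator $\pd{b}^J$ corresponds to $\pd{b'}^{\{1,\dots,p\}}$. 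Moreover $\alpha_{\{1,\dots,q\}}(a')$ is a conjugate of $\alpha_F(a)$ by a permutation matrix, hence regular, and the hyperplane $V(\{1,\dots,p\},a',b')$ coincides as a subset of $\mathbf R^d$ with $V(J,a,b)$. Applying Lemma \ref{lem:2} to $(a',b')$ and transporting the conclusion back along $\sigma$ gives the claimed formula, the factor $(-1)^{|J|}/\sqrt{|\alpha_J(a)|}$ being manifestly invariant under the relabeling.

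The main obstacle is purely bookkeeping: one must verify that each ingredient in the target expression -- the sign, the Gram determinant $|\alpha_J(a)|$, the hyperplane $V(J,a,b)$, and the remaining indicator product over $F\setminus J$ -- depends only on the underlying sets $J$ and $F$ and not on any ordering of their elements. All of these quantities are manifestly symmetric in this way, so the reduction to Lemma \ref{lem:2} is straightforward and no further substantive work is required.
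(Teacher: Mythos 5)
Your proposal is correct and follows essentially the same route as the paper's own proof: the case $J\not\subset F$ is dispatched by noting that $\varphi_F$ does not depend on $b_j$ for $j\in J\setminus F$, and the case $J\subset F$ is reduced to Lemma \ref{lem:2} by relabeling indices (the paper simply asserts the relabeling "without loss of generality," while you spell out the bookkeeping). The only implicit point worth noting is that the hypothesis that $\alpha_F(a)$ is regular forces $|F|\leq d$, which is what guarantees the condition $q\leq d$ required by Lemma \ref{lem:2}.
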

\begin{proof}
When $J\subset F$, it reduces to Lemma \ref{lem:2} 
since we can assume $J=\left\{1,\dots, p\right\}$, $
F=\left\{1,\dots, p,\dots,q\right\}$, and $1\leq p\leq q\leq d$
without loss of generality.
When $J\not\subset F$, the integral in \eqref{6} does not depend on 
the variables $b_j\,(j\in J\backslash F)$.
Hence, the derivative with respect to $b_j$ is $0$.
\end{proof}
\begin{theorem}\label{23}
Suppose that the polyhedron $P$ is in general position and $J\in\mathcal F$,
then there exists a neighborhood $U$ of the parameter 
$(\tilde a,\tilde b)\in\mathbf R^{d\times n}\times\mathbf R^n$
such that the equation 
\[
g^J(a,b)
= \frac{1}{(2\pi)^{d/2}\sqrt{|\alpha_J(a)|}}
\int_{V(J,a,b)} 
e^{-\frac{1}{2}\sum_{i=1}^dx_i^2}
\prod_{j\in [n]\backslash J} H(f_j(a,b,x))
\mu(dx)
\]
holds for any $(a,b)\in U$.
\end{theorem}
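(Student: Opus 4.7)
The plan is to take the definition $g^J = \partial_b^J \varphi$, expand $\varphi$ via its decomposition into the $\varphi_F$, apply Lemma \ref{lem:3} to each term, and finally collapse the resulting sum over $\mathcal{F}_J$ by means of the inclusion--exclusion identity for faces from Theorem \ref{14}.

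First I would choose $U$ small enough that three things hold simultaneously: (i) Lemma \ref{12} applies, so $P(a,b)$ remains in general position and $\mathcal{F}(a,b)=\mathcal{F}$ for all $(a,b)\in U$; (ii) Theorem \ref{14} applies, so the face-wise inclusion--exclusion identity holds on $V(J,a,b)$; and (iii) $\alpha_F(a)$ remains non-singular for every $F\in\mathcal{F}$ (this follows from general position and continuity of determinants). These are open conditions, so their intersection is still a neighborhood of $(\tilde a,\tilde b)$.

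Next I would apply $\partial_b^J=\prod_{j\in J}\partial_{b_j}$ term by term to the decomposition
\[
\varphi(a,b)=\sum_{F\in\mathcal{F}}\frac{(-1)^{|F|}}{(2\pi)^{d/2}}\varphi_F(a,b).
\]
By Lemma \ref{lem:3} the contribution of each $F$ with $J\not\subset F$ vanishes, so only $F\in\mathcal{F}_J$ remain. For those, Lemma \ref{lem:3} gives a factor $(-1)^{|J|}/\sqrt{|\alpha_J(a)|}$ and an integral over $V(J,a,b)$ with integrand $\prod_{j\in F\setminus J}H(-f_j)$. Using $J\subset F$ so that $|F|+|J|\equiv |F\setminus J|\pmod 2$, together with $H(-x)=-(H(x)-1)$ almost everywhere (which produces another factor $(-1)^{|F\setminus J|}$), the two sign factors cancel, yielding
\[
g^J(a,b)=\frac{1}{(2\pi)^{d/2}\sqrt{|\alpha_J(a)|}}\int_{V(J,a,b)}e^{-\frac{1}{2}\sum_{i=1}^d x_i^2}\sum_{F\in\mathcal{F}_J}\prod_{j\in F\setminus J}\bigl(H(f_j(a,b,x))-1\bigr)\,\mu(dx),
\]
after swapping the finite sum with the integral.

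Finally I would apply Theorem \ref{14} pointwise on $V(J,a,b)$ to replace the inner sum with $\prod_{j\in[n]\setminus J}H(f_j(a,b,x))$, which yields exactly \eqref{23}. The main obstacle is the sign bookkeeping in combining Lemma \ref{lem:3} with the face inclusion--exclusion identity; once one checks that $(-1)^{|F|+|J|}$ from the $\varphi_F$ expansion exactly cancels the $(-1)^{|F\setminus J|}$ coming from converting $H(-f_j)$ into $H(f_j)-1$, the rest is a direct substitution. A secondary technical point is justifying that $U$ can be shrunk to make all three preceding results apply at once, but this is routine because each condition is open.
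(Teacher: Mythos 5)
Your proposal is correct and follows essentially the same route as the paper: expand $g^J$ over the $\varphi_F$, apply Lemma \ref{lem:3} to kill the terms with $J\not\subset F$ and produce the factor $(-1)^{|J|}/\sqrt{|\alpha_J(a)|}$, cancel the signs via $(-1)^{|F|+|J|}=(-1)^{|F\setminus J|}$ and $-H(-x)=H(x)-1$, and finish with Theorem \ref{14}. Your explicit verification that $U$ can be chosen so that Lemma \ref{12}, Theorem \ref{14}, and the nonsingularity of every $\alpha_F(a)$ hold simultaneously is a small point the paper leaves implicit, but the argument is otherwise identical.
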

\begin{proof}
By \eqref{eq:varphi} and \eqref{22}, we have 
$$
(2\pi)^{d/2}g^J(a,b)
= \sum_{F\in \mathcal F} \pd{b}^J\bullet
(-1)^{|F|}
\int_{\mathbf R^d}
e^{-\frac{1}{2}\sum_{i=1}^dx_i^2}
\prod_{j\in F} H(-f_j(a,b,x))
dx.
$$
Applying Lemma \ref{lem:3} to each term on the right hand side of 
the above equation, we can show that $(2\pi)^{d/2}g^J(a,b)$ is equal to 
\begin{align*}
&
\frac{1}{\sqrt{|\alpha_J(a)|}}
\int_{V(J,a,b)} 
e^{-\frac{1}{2}\sum_{i=1}^dx_i^2}
\sum_{F\in\mathcal F_J} (-1)^{|F\backslash J|}
\prod_{j\in F\backslash J} H(-f_j(a,b,x))
\mu(dx)\\
&=
\frac{1}{\sqrt{|\alpha_J(a)|}}
\int_{V(J,a,b)} 
e^{-\frac{1}{2}\sum_{i=1}^dx_i^2}
\sum_{F\in\mathcal F_J} 
\prod_{j\in F\backslash J} H(f_j(a,b,x)-1)
\mu(dx).
\end{align*}
Hence, Theorem \ref{14} implies Theorem \ref{23}.
\end{proof}

\section{Holonomic Gradient Method}\label{sec:HGM}
In this section, we discuss the computation of the probability content
of a polyhedron with a multivariate normal distribution
for the case where the polyhedron is in general position
and bounded, and the case where the polyhedron is a simplicial cone.

\subsection{The Bounded Case}
Let us consider the case where the polyhedron $P$ in general position is bounded.
\begin{lemma}\label{15}
Suppose the polyhedron $P$ is bounded.
Then, the set
\begin{equation}\label{13}
\left\{x\in\mathbf R^d\mid \sum_{i=1}^d\tilde a_{ij}x_i\geq 0\,(1\leq j\leq n)\right\}
\end{equation}
contain only the origin.
\end{lemma}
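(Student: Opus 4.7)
The statement is the classical fact that a polyhedron is bounded if and only if its recession cone is trivial, specialized to the present setup. The plan is therefore to prove the contrapositive: if the cone
$$
C := \left\{y\in\mathbf R^d\mid \sum_{i=1}^d\tilde a_{ij}y_i\geq 0\,(1\leq j\leq n)\right\}
$$
contains a nonzero vector $y$, then $P$ contains an unbounded ray, contradicting boundedness.

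First, I would fix any nonzero $y\in C$ and any point $x_0\in P$ (here I need $P$ to be non-empty, which follows from the standing assumption that $P$ is in general position, cf.\ the usage of general position in \cite{Koyama2015}; if the authors wish, one can simply assume $P\neq\emptyset$ in the hypothesis, since the statement is vacuous otherwise). Next, a direct calculation gives
$$
\sum_{i=1}^d \tilde a_{ij}(x_{0,i}+\lambda y_i)+\tilde b_j
= \Bigl(\sum_{i=1}^d\tilde a_{ij}x_{0,i}+\tilde b_j\Bigr) + \lambda \sum_{i=1}^d\tilde a_{ij}y_i \geq 0
$$
for every $\lambda\geq 0$ and every $1\leq j\leq n$, where the first parenthesis is non-negative because $x_0\in P$ and the second sum is non-negative because $y\in C$. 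Hence the ray $\{x_0+\lambda y : \lambda\geq 0\}$ lies in $P$, and since $y\neq 0$ we have $\|x_0+\lambda y\|\to\infty$ as $\lambda\to\infty$. This contradicts the assumption that $P$ is bounded, so $C=\{0\}$.

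The argument is essentially bookkeeping, and the only step that requires any care is ensuring the existence of a basepoint $x_0\in P$, which is the only place the general position / non-emptiness hypothesis enters. No obstacle beyond that is anticipated.
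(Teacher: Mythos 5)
Your proposal is correct and follows essentially the same route as the paper: the paper simply cites Ziegler's Proposition 1.12 to identify the set \eqref{13} with the recession cone of $P$, whereas you prove the one inclusion actually needed ($C\subseteq\{y\mid x+ty\in P,\ x\in P,\ t\ge 0\}$) by the explicit ray computation. Your remark that $P\neq\emptyset$ is required is well taken --- the paper's citation implicitly needs it too (for an empty but ``bounded'' $P$ the claim can fail), and it is supplied by the standing general-position and bounding-half-space hypotheses.
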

\begin{proof}
By Proposition 1.12 in \cite{ziegler}, the set \eqref{13} is equal to
$$
\left\{y\in\mathbf R^d\mid x+ty\in P\,\left(x\in P,\, t\geq 0\right)\right\}.
$$
Since $P$ is bounded, this set does not contain any element except the origin.
\end{proof}

\begin{proposition}
Suppose the polyhedron $P$ in general position is bounded.
Then, for $J\in\mathcal F$, we have
\begin{equation}\label{16}
g^J(\tilde a,0)=
\begin{cases}
\frac{1}{\sqrt{|\alpha_J(\tilde a)|}} & (|J|=d)\\
0 & (|J|\neq d)
\end{cases}.
\end{equation}
\end{proposition}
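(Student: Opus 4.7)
The plan is to evaluate the integral representation \eqref{23} from the preceding theorem at the degenerate parameter $(a,b)=(\tilde a,0)$ and use the boundedness hypothesis, through Lemma~\ref{15}, to reduce the support of the integrand to the origin. When $b=0$, the affine hyperplane $V(J,\tilde a,0)$ collapses to the linear subspace $W:=\{x\in\mathbf R^d\mid \sum_i\tilde a_{ij}x_i=0,\ j\in J\}$, which passes through the origin and has dimension $d-|J|$ by general position. Since $f_j(\tilde a,0,x)=\sum_i\tilde a_{ij}x_i$, the factor $\prod_{j\in[n]\backslash J}H(f_j(\tilde a,0,x))$ is $1$ precisely on $W\cap\{x\mid\sum_i\tilde a_{ij}x_i\ge 0,\ j\notin J\}$; combined with the defining equations of $W$, this confines $x$ to the recession set \eqref{13}, which by Lemma~\ref{15} equals $\{0\}$.

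With the support localized, the two cases separate cleanly. For $|J|<d$, the subspace $W$ is positive-dimensional, so the singleton $\{0\}$ has $\mu$-measure zero and the integral in \eqref{23} vanishes, giving $g^J(\tilde a,0)=0$. For $|J|=d$, the subspace $W$ itself reduces to $\{0\}$ and the integration collapses to a point evaluation: the Gaussian factor is $e^{0}=1$, and the prefactor $1/\sqrt{|\alpha_J(\tilde a)|}$ in \eqref{23} produces the value claimed in \eqref{16}.

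The main technical obstacle is that \eqref{23} was established only on a neighborhood $U$ of $(\tilde a,\tilde b)$, and the target point $(\tilde a,0)$ need not lie in $U$ — indeed when $P$ is bounded one has $P(\tilde a,0)=\{0\}$, so the underlying polyhedron degenerates entirely. The cleanest way around this is to avoid invoking \eqref{23} at $(\tilde a,0)$ directly and instead expand $g^J=(2\pi)^{-d/2}\sum_{F\in\mathcal F}(-1)^{|F|}\pd{b}^J\bullet\varphi_F$ from \eqref{eq:varphi} and \eqref{22}, and then apply Lemma~\ref{lem:3} to each summand. Lemma~\ref{lem:3} annihilates every term with $J\not\subset F$; on the surviving terms the same support argument forces the integrand to be supported at the origin, so each one vanishes when $|J|<d$. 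When $|J|=d$, general position prevents any $F\in\mathcal F$ from strictly containing $J$, leaving only $F=J$, and the $p=d$ case of Lemma~\ref{lem:1} evaluates this single term directly, yielding \eqref{16} after including the overall sign $(-1)^{|F|}$.
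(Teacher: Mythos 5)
Your opening paragraph reproduces the computation the paper actually performs, and your diagnosis of the obstacle --- that \eqref{23} is only established on a neighborhood of $(\tilde a,\tilde b)$, which need not contain the degenerate point $(\tilde a,0)$ --- is a fair one. But the repair you propose in the last paragraph has a genuine gap. Lemma~\ref{lem:3} expresses each surviving term $\pd{b}^J\bullet\varphi_F$ (for $J\subset F$) as an integral over $V(J,a,b)$ of $\prod_{j\in F\backslash J}H(-f_j(a,b,x))$: the Heaviside factors point into the \emph{complements} of the half-spaces and run only over $j\in F\backslash J$, not over all of $[n]\backslash J$. The support of this integrand is therefore not the recession set \eqref{13}, and Lemma~\ref{15} says nothing about it. Concretely, for $F=J$ with $|J|<d$ the term is the unrestricted Gaussian integral over the $(d-|J|)$-dimensional subspace $V(J,\tilde a,0)$, namely $(-1)^{|J|}(2\pi)^{(d-|J|)/2}/\sqrt{|\alpha_J(\tilde a)|}\neq 0$, so the claimed term-by-term vanishing fails. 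The vanishing for $|J|<d$ appears only after the alternating sum over $F\in\mathcal F_J$ is recombined, via Theorem~\ref{14}, into the single product $\prod_{j\in[n]\backslash J}H(f_j)$ whose support is \eqref{13} --- and Theorem~\ref{14} is likewise proved only on a neighborhood of $(\tilde a,\tilde b)$, so your route lands back on exactly the obstacle you set out to avoid.

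The paper's resolution is a limit along the scaling path: it writes $g^J(\tilde a,0)=\lim_{t\rightarrow+0}g^J(\tilde a,t\tilde b)$, applies the representation \eqref{23} at $(\tilde a,t\tilde b)$ for $t>0$ (where $P(\tilde a,t\tilde b)$ is a dilate of $P$, hence still in general position with the same complex $\mathcal F$), passes to the limit inside the integral, and only then invokes Lemma~\ref{15} to shrink the domain of integration to $\{0\}$. Your $|J|=d$ case happens to survive, since general position forces $\mathcal F_J=\{J\}$ there and the $p=d$ case of Lemma~\ref{lem:1} applies directly, but the $|J|<d$ case of your argument is incorrect as written and needs either the resummation plus a justification of Theorem~\ref{14} at the degenerate parameter, or the paper's continuity-and-limit argument.
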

\begin{proof}
Calculating the left hand side, we have
\begin{align*}
&(2\pi)^{d/2}g^J(\tilde a,0)\\
&= \lim_{t\rightarrow +0}(2\pi)^{d/2}g^J(\tilde a,tb)\\
&= \lim_{t\rightarrow +0}
\frac{1}{\sqrt{|\alpha_J(\tilde a)|}}
\int_{V(J,\tilde a,tb)} 
e^{-\frac{1}{2}\sum_{i=1}^dx_i^2}
\prod_{j\in [n]\backslash J} H(f_j(\tilde a,tb,x))
\mu(dx)\\
&= 
\frac{1}{\sqrt{|\alpha_J(\tilde a)|}}
\int_{V(J,\tilde a,0)} 
e^{-\frac{1}{2}\sum_{i=1}^dx_i^2}
\prod_{j\in [n]\backslash J} H(f_j(\tilde a,0,x))
\mu(dx). 
\end{align*}
By Lemma \ref{15}, the integral domain is $\{0\}$. Hence we have \eqref{16}.
\end{proof}
Consequently, in order to compute the probability content of $P$
for a multivariate normal distribution, we can take the path of the HGM as 
$$
a(t) = \tilde a,\, b(t) = \tilde tb
\quad (0\leq t\leq 1).
$$
This path does not pass through the singular locus of the Pfaffian equations
\eqref{pfaff1}, \eqref{pfaff2} and \eqref{pfaff3}.
The initial value $g^J(a(0),0)$ at $t=0$ is given explicitly by \eqref{16}.

\subsection{The Simplicial Cone Case}\label{18}
Consider the case where the polyhedron $P$ is a simplicial cone, i.e.,
$n=d$ and the vectors $\tilde a_1,\dots, \tilde a_d$ are linearly independent.
We can assume without loss of generality that $\tilde a$ is an upper triangular 
matrix.
Then define $\gamma(t)=(a(t),b(t))$ by
$$
a(t) = (1-t)\mathrm{diag}(\tilde a_{11},\dots,\tilde a_{dd})+t\tilde a,\, 
b(t)=t\tilde b
\quad(0\leq t\leq 1).
$$
This does not pass through the singular locus of the Pfaffian equation.
The initial value is 
$$
g^J(a(0),b(0))
=\frac{1}{\left|\prod_{j\in J}\tilde a_{jj}\right|}\sqrt{\frac{\pi}{2}}^{d-|J|}.
$$

\section{Tukey--Kramer studentized range}
We denote by $\mu_d$ a probability measure on 
the Euclidean space $\mathbf R^d$ defined by 
$$
\mu_d(A)
=\int_A \exp\left(-\frac{1}{2}\sum_{i=1}^{d-1}x_i^2\right) dx
\quad (A \subset\mathbf R^d).
$$
Take a parameter $t$, and consider a polyhedron
$$
P(t) := 
\left\{
 x\in\mathbf R^d \mid
 |x_i-x_j|\leq t, \, 1\leq i\leq j\leq d
\right\}.
$$
In this section, we apply the HGM to the function
$$
F(t) := \mu_d(P(t)) \quad (t\geq 0).
$$
This is a special case of Tukey--Kramer studentized range statistics.
See, e.g., \cite{naiman-wynn1997} or \cite{naiman-wynn1992}.

We prepare some lemmas.
\begin{lemma}\label{a1}
Let $U=(u_{ij})$ be a $d\times d$ matrix whose entries are
$$
u_{ij}:=
\begin{cases}
\frac{1}{\sqrt{d}} & ( j=d, 1\leq i\leq d)\\
\frac{1}{\sqrt{j(j+1)}} & (1\leq j\leq d-1, 1\leq i\leq j)\\
-\frac{j}{\sqrt{j(j+1)}} & (1\leq j\leq d-1, i=j+1)\\
0 & (1\leq j\leq d-1, j+2\leq i \leq d)
\end{cases}.
$$
Then, $U$ is a special orthogonal matrix.
\end{lemma}
\begin{proof}
By a straight forward calculation, we can show
$\sum_{k=1}^d u_{ki}u_{kj}=\delta_{ij}\,(1\leq i\leq j\leq d)$.
In fact, we have the following calculation.
For $i=j=d$, we have
\begin{align*}
\sum_{k=1}^d u_{ki}u_{kj}
&= \sum_{k=1}^d u_{kd}^2
 = \sum_{k=1}^d \frac{1}{d} 
 = 1.
\end{align*}
\\
For $i=j<d$, we have
\begin{align*}
\sum_{k=1}^d u_{ki}u_{kj}
&= \sum_{k=1}^d u_{ki}^2 
 = \left(\sum_{k= 1 }^i \frac{1}{i(i+1)}\right)
 + \frac{i^2}{i(i+1)}
 = 1.
\end{align*}
\\
For $i<j=d$, we have
\begin{align*}
\sum_{k=1}^d u_{ki}u_{kj}
&= \sum_{k=1}^d u_{ki}u_{kd}
 = \left(\sum_{k=1}^i \frac{1}{\sqrt{i(i+1)}}\cdot\frac{1}{\sqrt{d}}\right)
 -\frac{i}{\sqrt{i(i+1)}}\cdot\frac{1}{\sqrt{d}}
 = 0.
\end{align*}
\\
For $i<j<d$, we have
\begin{align*}
\sum_{k=1}^d u_{ki}u_{kj}
&=\left(\sum_{k=1}^i \frac{1}{\sqrt{i(i+1)}}\cdot\frac{1}{\sqrt{j(j+1)}}\right)
 -\frac{i}{\sqrt{i(i+1)}}\cdot\frac{1}{\sqrt{j(j+1)}}
 = 0.
\end{align*}

Put a $d\times d$ matrix $V=(v_{ij})$ by
$$
v_{ij}:=
\begin{cases}
1 & (i\geq j)\\
0 & (i < j)
\end{cases}.
$$
Then the matrix $VU$ is an upper triangular matrix,
and the $(i,i)$ components are
$$
\sum_{k=1}^d v_{ik}u_{ki}=
\begin{cases}
\sqrt{\frac{i}{i+1}} & ( 1\leq i \leq d-1)\\
\sqrt{d} & ( i = d)
\end{cases}.
$$
In fact, for $i>j$, the $(i,j)$ component of the matrix $VU$ is 
\begin{align*}
\sum_{k=1}^d v_{ik}u_{kj}
&=\sum_{k=1}^i u_{kj}
 =\left(\sum_{k=1}^j u_{kj} \right) + u_{(j+1)j}\\
&=\left(\sum_{k=1}^j \frac{1}{\sqrt{j(j+1)}} \right) - \frac{j}{\sqrt{j(j+1)}}
 =0.
\end{align*}
For $i<d$, the $(i,i)$ component of $VU$ is
\begin{align*}
\sum_{k=1}^d v_{ik}u_{ki}
&=\sum_{k=1}^i u_{ki}
 =\left(\sum_{k=1}^i u_{ki} \right)\\
&=\left(\sum_{k=1}^i \frac{1}{\sqrt{i(i+1)}} \right)
 =\sqrt{\frac{i}{i+1}}.
\end{align*}
The $(d,d)$ component is 
\begin{align*}
\sum_{k=1}^d v_{dk}u_{kd}
&=\sum_{k=1}^d u_{kd}
 =\left(\sum_{k=1}^d \frac{1}{\sqrt{d}} \right)
 =\sqrt{d}.
\end{align*}

By $\det V=1$ and $\det (VU) = 1$, we have $\det U=1$.
Hence, $U$ is a special orthogonal matrix.
\end{proof}

\begin{lemma}\label{a2}
A polyhedron $Q(t)$ defined by the following system of inequalities 
is a simplex in $\mathbf R^{d-1}$.
\begin{align}
\label{a3}
\sqrt{2}y_1 &\geq  0, \\
\label{a4}
-\sqrt{i-1}y_{i-1}+\sqrt{i+1}y_i &\geq  0 \quad (2\leq i \leq d-1),\\
\label{a5}
-\sum_{j=1}^{d-2}\frac{1}{\sqrt{j(j+1)}}y_j-\frac{d}{\sqrt{(d-1)d}}y_{d-1}+t &\geq  0 
\end{align}
\end{lemma}
\begin{proof}
Put a $(d-1)\times(d-1)$ matrix $a=(a_{ij})$ by 
\[
a_{ij}:=
\begin{cases}
 \sqrt{j+1} & (i=j,\, 1\leq j\leq d-1)\\
-\sqrt{j} & (i=j+1,\, 1\leq j\leq d-2)\\
0 & (\text{else}) 
\end{cases}.
\]
The matrix $a$ is regular, and we denote by $a^{-1}=(a^{ij})$
the inverse matrix of $a$.
By a linear transformation $z=ay$,
the system of the inequalities which defines $Q(t)$ can be written as 
\begin{align*}
z_i&\geq 0 \quad (1\leq i\leq d-1), \\
t-\sum_{k=1}^{d-1}a'_kz_k&\geq 0 .
\end{align*}
Here, we put
$$
a'_k:=\sum_{j=1}^{d-2}\frac{a^{jk}}{\sqrt{j(j+1)}}+\frac{da^{(d-1)k}}{\sqrt{(d-1)d}}.
$$
If we have $a'_k\neq 0$ for all $1\leq k\leq d-1$,
then $Q(t)$ is a simplex.

Since the matrix $a$ is lower triangular,
the inverse matrix $a^{-1}$ is also lower triangular.
Obviously, we have
$$
a^{ii} = \frac{1}{\sqrt{i+1}}>0
\quad(1\leq i\leq d-1).
$$
For $i>j$, 
$
\sum_{k=1}^{d-1} a_{ik}a^{kj}
=-\sqrt{i-1}a^{(i-1)j} +\sqrt{i+1}a^{ij}
=0
$
implies 
$$
a^{ij}=\sqrt{\frac{i-1}{i+1}}a^{(i-1)j}>0.
$$
Hence we have $a'_k>0$.
\end{proof}

By the following proposition, the evaluation of the distribution function
$F(t)$ is the evaluation of the region probability of a simplex.
\begin{proposition}
For $t\geq 0$, the following equation holds:
$$
\mu_d(P(t))=d!\cdot \mu_{d-1}(Q(t)).
$$
\end{proposition}
\begin{proof}
Let $S_d$ be the symmetric group of degree $d$.
The polyhedron $P(t)$ can be written as 
$
\bigcup_{\sigma\in S_d}
\left(
\left\{x\in\mathbf R^d\mid
x_{\sigma(1)}\geq\cdots\geq x_{\sigma(d)}
\right\}
\cap P(t)
\right).
$
By the symmetry of $\mu_d$, we have
\begin{align*}
&
\mu_d\left(
\left\{x\in\mathbf R^d\mid
x_{\sigma(1)}\geq\cdots\geq x_{\sigma(d)}
\right\}
\cap P(t)
\right)\\
&=\mu_d\left(\left\{x\in\mathbf R^d\mid
x_1\geq \cdots\geq  x_d
\right\}
\cap P(t)
\right)\\
&=\mu_d\left(\left\{x\in\mathbf R^d\mid
x_1\geq \cdots\geq  x_d,
\,x_1-x_d<t, 
\right\}
\right)
\end{align*}
and 
$$
\mu_d(P(t))
= 
d! \cdot
\mu_d\left(\left\{x\in\mathbf R^d\mid
x_1\geq \cdots\geq  x_d,
\,x_1-x_d\leq t, 
\right\}\right).
$$
Let $U$ be the special orthogonal matrix in Lemma \ref{a1}.
By the linear transformation $x=Uy$, we have
\begin{align*}
&
\mu_d\left(\left\{x\in\mathbf R^d\mid
x_1\geq \cdots\geq  x_d,
\,x_1-x_d\leq t, 
\right\}\right)\\
&=
\mu_d\left(\left\{y\in\mathbf R^d\mid
(y_1,\cdots,y_{d-1})^\top\in Q(t),\,
y_d\in\mathbf R
\right\}\right)\\
&=
\mu_{d-1}\left(Q(t)\right).
\end{align*}
In fact, by the substitution $x=Uy$,
the inequalities $x_1\geq x_2$, $x_i\geq x_{i+1}\,(2\leq i\leq d-1)$, and $x_d-x_1+t\geq 0$
imply the inequalities \eqref{a3},\eqref{a4},and \eqref{a5}respectively.
\if0
In fact, we have the following calculations.
Substituting $x=Uy$ to $x_1\geq x_2$, we have
$$
\frac{1}{\sqrt{2}}y_1
+\sum_{j=2}^{d-1}\frac{1}{\sqrt{j(j+1)}}y_j
+\frac{1}{\sqrt{d}}y_d
\geq 
-\frac{1}{\sqrt{2}}y_1
+\sum_{j=2}^{d-1}\frac{1}{\sqrt{j(j+1)}}y_j
+\frac{1}{\sqrt{d}}y_d,
$$
then we have
$$
\sqrt{2}y_1 \geq  0.
$$
Substituting $x=Uy$ to $x_i\geq x_{i+1}\,(2\leq i\leq d-1)$, we have
\begin{align*}
-\frac{i-1}{\sqrt{(i-1)i}}y_{i-1}
+\sum_{j=i}^{d-1}\frac{1}{\sqrt{j(j+1)}}y_j
&\geq 
-\frac{i}{\sqrt{i(i+1)}}y_i
+\sum_{j=i+1}^{d-1}\frac{1}{\sqrt{j(j+1)}}y_j
\\
+\frac{1}{\sqrt{d}}y_d
&\quad\quad\quad\quad\quad\quad\quad\quad\quad
+\frac{1}{\sqrt{d}}y_d\\
-\frac{i-1}{\sqrt{(i-1)i}}y_{i-1}
+\frac{i+1}{\sqrt{i(i+1)}}y_i
&\geq 0
\\
-\sqrt{(i-1)}y_{i-1}
+\sqrt{(i+1)}y_i
&\geq 0.
\end{align*}
Substituting $x=Uy$ to $x_d-x_1+t\geq 0$, we have
\begin{align*}
\left(-\frac{d-1}{\sqrt{(d-1)d}}y_{d-1}+\frac{1}{\sqrt{d}}y_d\right)
-\left(
\sum_{j=1}^{d-1}\frac{1}{\sqrt{j(j+1)}}y_j
+\frac{1}{\sqrt{d}}y_d
\right)
+t&\geq 0\\
-\frac{d}{\sqrt{(d-1)d}}y_{d-1}
-\sum_{j=1}^{d-2}\frac{1}{\sqrt{j(j+1)}}y_j
+t&\geq 0.
\end{align*}
\fi
\end{proof}

Applying Theorem 16 in \cite{Koyama2015} and Theorem \ref{23}
to the polyhedron $Q(t)$,
we can obtain a system of differential equations for 
the distribution function $F(t)$.
\begin{proposition}
The distribution function $F(t)$ satisfies 
the following system of the differential equations:
\begin{align*}
\frac{dF}{dt}
&=F_{11},\\
\frac{dF_{k\ell}}{dt}
&=
-t\frac{k\ell}{k+\ell} F_{k\ell}
+\frac{\ell}{k+\ell} F_{(k+1)\ell}
+\frac{k}{k+\ell} F_{k(\ell+1)}\\
&\quad (k+\ell\leq d,\,k\geq 1,\,\ell\geq 1)\\
F_{k\ell} 
&= 0
\quad (k+\ell>d,\,k\geq 1,\,\ell\geq 1).
\end{align*}
When $t=0$, the initial values of the functions $F(t)$ and $F_{k\ell}(t)$ are
\begin{align*}
F(0)&= 0,\\
F_{k\ell}(0)&=\frac{d!}{(2\pi)^{(d-1)/2}\sqrt{d}} 
&&(k+\ell=d,\,k\geq 1,\,\ell\geq 1),\\
F_{k\ell}(0)&= 0
&&(k+\ell<d,\,k\geq 1,\,\ell\geq 1).
\end{align*}
\end{proposition}
\begin{proof}
Obviously, the simplex $Q(t)$ is defined by the fallowing inequalities:
\begin{align}
\label{a8}
\sqrt{2}y_1 
&\geq  0, \\
\label{a9}
-\sqrt{\frac{i-1}{i}}y_{i-1}+\sqrt{\frac{i+1}{i}}y_i 
&\geq  0 \quad (2\leq i \leq d-1),\\
\label{a10}
-\sum_{j=1}^{d-2}\frac{1}{\sqrt{j(j+1)}}y_j-\frac{d}{\sqrt{(d-1)d}}y_{d-1}+t &\geq  0.
\end{align}
We apply Theorem 16 in \cite{Koyama2015} to the system of inequalities 
\eqref{a8},\eqref{a9}, and \eqref{a10}.
By Lemma \ref{a2},
the abstract simplicial complex associated with $Q(t)$
is $\mathcal F=\{J\subset[d]\mid J\neq [d]\}$.
Put a $(d-1)\times d$ matrix $a=(a_{ij})$ by
$$
a_{ij}=
\begin{cases}
 \sqrt{(i+1)/i}   & (1\leq i\leq d-1, j=i)\\
-\sqrt{i/(i+1)}   & (1\leq i\leq d-2, j=i+1)\\
-\sqrt{1/(i^2+i)} & (1\leq i\leq d-2, j=d)\\
-\sqrt{d/(d-1)}   & (i=d-1,j=d)\\
0 & (\text{else})
\end{cases},
$$
and let $b$  be a vector with length $d$ given by $(0, \dots, 0, 1)^\top$.
Put $\alpha=(\alpha_{ij}):a^\top a$,
i.e., $\alpha_{ij}=\sum_{k=1}^{d-1}a_{ki}a_{kj}$, 
then the components of the matrix $\alpha$ are
$$
\alpha_{ij}=
\begin{cases}
2 & (i=j)\\
-1 & (i-j-1\in d\mathbf Z)\\
-1 & (i-j+1\in d\mathbf Z)\\
0 & (\text{else})
\end{cases}.
$$
Define a map $\gamma:[0,\infty)\rightarrow \mathbf R^{(d-1)\times d}\times R^d$
by $\gamma(t):=(a,tb)$, 
then the derivative of $F_J(t):=g^J(\gamma(t))\,(J\in\mathcal F)$ is 
$$
\frac{dF_J}{dt}=
\begin{cases}
F_{J\cup\{d\}} & (d\notin J)\\
-\sum_{k\in J}\alpha^{dk}_J
\left(\delta_{kd} t F_J+\sum_{\ell\in J^c} \alpha_{k\ell} F_{J\cup \ell}\right)
& (d\in J)
\end{cases}.
$$
For a subset $J\subset [d]$, we put 
$\alpha_J:=(\alpha_{ij})_{i,j\in J}$, and 
$\alpha_J^{-1}:=(\alpha^{ij}_J)_{i,j\in J}$.
For $1\leq k,\ell\leq d$, we put
$$
J(k,\ell)
:= \left\{i\in\mathbf N\mid 1\leq i\leq k-1 \right\}
\cup
\left\{i\in\mathbf N\mid d+1-\ell\leq i\leq d \right\}
$$
and $F_{k\ell}(t):= F_{J(k,\ell)}(t)$.
Obviously, we have
$dF/dt=dF_\emptyset/dt=F_{\{d\}}=F_{11}$
and
$F_{k(d-k+1)}(t)=F_{[d]}(t)=0$ for $1\leq k\leq d$.

For $1\leq k\leq d-1$ and $1\leq \ell \leq d-k$,
we have
\begin{align*}
\frac{dF_{k\ell}}{dt}
&=
-\sum_{j\in J(k,\ell)}\alpha^{dj}_{J(k,\ell)}
\left(\delta_{jd} t F_{k\ell}+\sum_{i=k}^{d-\ell}\alpha_{ji} F_{J(k,\ell)\cup \{i\}}\right)
\\&=
-\sum_{j\in J(k,\ell)}\alpha^{dj}_{J(k,\ell)}
\left(
   \delta_{jd} t F_{k\ell}
  +\alpha_{jk} F_{J(k,\ell)\cup \{k\}}
  +\alpha_{j(d-\ell)} F_{J(k,\ell)\cup \{d-\ell\}}
\right)
\\&=
-t\alpha^{dd}_{J(k,\ell)}F_{k\ell}
+\alpha^{d(k-1)}_{J(k,\ell)}F_{(k+1)\ell}
+\alpha^{d(d-\ell+1)}_{J(k,\ell)}F_{k(\ell+1)}.
\end{align*}
Here, we put $\alpha^{d0}_{J(k,\ell)}:=\alpha^{dd}_{J(k,\ell)}$.
Since we have
\begin{align}
\label{a12}
\alpha^{dd}_{J(k,\ell)}&= k\ell/(k+\ell) , & 
\alpha^{d(k-1)}_{J(k,\ell)}&=\ell/(k+\ell), & 
\alpha^{d(d-\ell+1)}_{J(k,\ell)}&=k/(k+\ell),
\end{align}
we have the system of the differential equations in the proposition.
Our proof for the equations \eqref{a12} is tedious, and we leave it in appendix.

By Theorem \ref{23}, $F_{ij}(t)$ can be written as 
$$
\frac{d!}{(2\pi)^{(d-1)/2}\sqrt{|\alpha_{J(k,\ell)}|}}
\int_{V(J(k,\ell),a,tb)} 
e^{-\frac{1}{2}\sum_{i=1}^{d-1} x_i^2}
\prod_{j\in [d]\backslash J} H(f_j(a,tb,x))
\mu(dx).
$$
Since $Q(0)=\{0\}$, 
the domain of the above integral equals to $\{0\}\subset\mathbf R^{d-1}$ when $t=0$.
The equation
\begin{equation} \label{a11}
\det(\alpha_{J(k,\ell)}) = k+\ell
\end{equation}
implies the formulae for the initial values in the proposition.
We also leave our proof for \eqref{a11} to appendix.
\end{proof}

\section{Numerical Experiments}\label{sec:NE}
In this section we  compare the performance of our HGM method with a Monte Carlo simulation method.
In the Monte Carlo simulation method, we used the computer system {\tt R}
\cite{R}.
The programs and the raw data of our numerical experiments are obtained at 
\url{http://github.com/tkoyama-may10/simplex/}.
Our program is compiled by the GNU
C compiler version 6.3.0.
We performed the experiments on an
Intel(R)Core(TM) i5-7200U CPU @ 2.50GHz 2.71Gz
with 4.00GB RAM,
running Linux on virtual machine.

First, we evaluate the probability contents of simplices.
For an integer $d\geq 2$, we define polyhedra $P_d$ and $Q_d$ as 
\begin{align*}
P_d&=\left\{
x\in\mathbf R^d\mid
\begin{array}{c}
x_i+\frac{\sqrt{d}}{2}\geq 0\,(1\leq i\leq d),\\
-x_1-\dots -x_d+\frac{\sqrt{d}}{2}\geq 0 
\end{array}
\right\}, \\
Q_d&=\left\{
x\in\mathbf R^d\mid
\begin{array}{c}
x_i-\frac{\sqrt{d}}{2}\geq 0\,(1\leq i\leq d),\\
-x_1-\dots -x_d+\frac{(2d+1)\sqrt{d}}{2}\geq 0 
\end{array}
\right\}
\end{align*}
Both $P_d$ and $Q_d$ are simplices, and they are in general position and bounded. 
In the Monte Carlo method, we generated $1,000,000$ points from a normal 
distribution and computed the fraction of the sample that fell into simplices.

The probability contents obtained by the HGM and Monte Carlo methods are given in Tables \ref{tab1} and \ref{tab3}.
We also show the computational times for the HGM in the tables.

\ifmytab
\begin{table}[htbp]
\caption{The probability content of $P_d$ as obtained
 by the HGM and Monte Carlo methods.}\label{tab1}
\begin{center}
\begin{tabular}{cccc}
$d$ & HGM & time of HGM(s) & MC\\
\hline
2 & 0.285205 & 0.00 & 0.285483\\
3 & 0.251995 & 0.00 & 0.250978\\
4 & 0.241744 & 0.00 & 0.241334\\
5 & 0.242724 & 0.01 & 0.242846\\
6 & 0.250219 & 0.05 & 0.250527\\
7 & 0.261920 & 0.20 & 0.261850\\
8 & 0.276510 & 0.65 & 0.277142\\
9 & 0.293138 & 1.92 & 0.293659\\
10 & 0.311198 & 5.73 & 0.310761\\
\if0 
2 & 0.285205 & 0.00 & 0.285483\\
3 & 0.251995 & 0.00 & 0.250978\\
4 & 0.241744 & 0.01 & 0.241334\\
5 & 0.242724 & 0.03 & 0.242846\\
6 & 0.250219 & 0.10 & 0.250527\\
7 & 0.261920 & 0.32 & 0.261850\\
8 & 0.276510 & 1.04 & 0.277142\\
9 & 0.293138 & 3.16 & 0.293659\\
10 & 0.311198 & 9.53 & 0.310761\\
\fi
\hline
\end{tabular}
\end{center}
\end{table}
\fi

\ifmytab
\begin{table}[htbp]
\caption{The probability content of $Q_d$ as obtained by the HGM and Monte Carlo methods.}
\label{tab3}
\begin{center}
\begin{tabular}{cccc}
$d$& HGM & time of HGM(s) & MC\\
\hline
2 & 5.1758e-02 & 0.00 & 5.1856e-02\\
3 & 7.0235e-03 & 0.00 & 6.9250e-03\\
4 & 6.3101e-04 & 0.00 & 6.4200e-04\\
5 & 3.9722e-05 & 0.01 & 3.8000e-05\\
6 & 1.8042e-06 & 0.05 & 1.0000e-06\\
7 & 5.9878e-08 & 0.20 & 0.0000e+00\\
8 & 1.4799e-09 & 0.54 & 0.0000e+00\\
9 & 1.1393e-11 & 1.38 & 0.0000e+00\\
10 & 1.2861e-11 & 3.48 & 0.0000e+00\\
\if0 
2 & 5.1758e-02 & 0.00 & 5.1856e-02\\
3 & 7.0235e-03 & 0.00 & 6.9250e-03\\
4 & 6.3101e-04 & 0.01 & 6.4200e-04\\
5 & 3.9722e-05 & 0.02 & 3.8000e-05\\
6 & 1.8042e-06 & 0.10 & 1.0000e-06\\
7 & 5.9878e-08 & 0.32 & 0.0000e+00\\
8 & 1.4799e-09 & 0.85 & 0.0000e+00\\
9 & 1.1393e-11 & 2.24 & 0.0000e+00\\
10 & 1.2861e-11 & 5.74 & 0.0000e+00\\
\fi
\hline
\end{tabular}
\end{center}
\end{table}
\fi

Note that the accuracy of the Monte Carlo method is low when the probability content of $Q_d$ is very small, and for dimensions greater than $6$, the Monte Carlo method could not
evaluate the probability content of $Q_d$. 
The number of points was not enough to evaluate the probability.

Next, we estimate the probability content of a simplicial cone.
For an integer $d\geq 2$, we define a polyhedron $C_d$ as 
\begin{align*}
C_d &= \left\{
x\in\mathbf R^d\mid
\sum_{i=1}^d a_{ij}x_i + \frac{\sqrt{d}}{2}\geq 0\,(1\leq j\leq d)
\right\}, \\ 
a_{ij}&=
\begin{cases}
(i+j)/100 & (i<j)\\
1         & (i=j)\\
0         & (i>j)
\end{cases}.
\end{align*}
We can evaluate the multivariate normal probability of a simplicial cone
using the method presented in Subsection \ref{18}.
Table \ref{tab2} shows the probability content of $C_d$ evaluated
by the HGM and Monte Carlo methods, 
and the computational times for the HGM.
In the Monte Carlo method, we generated $1,000,000$ samples.
\ifmytab
\begin{table}[htbp]
\caption{Results of HGM for multivariate normal probabilities of simplicial cones}
\label{tab2}
\begin{center}
\begin{tabular}{cccc}
dim& HGM & time of HGM(s) & MC\\
\hline
2 & 0.580822 & 0.00 & 0.581123\\
3 & 0.532131 & 0.00 & 0.531827\\
4 & 0.512854 & 0.03 & 0.511994\\
5 & 0.509868 & 0.29 & 0.509142\\
6 & 0.516602 & 2.21 & 0.516291\\
7 & 0.529243 & 14.80 & 0.527087\\
8 & 0.545340 & 82.20 & 0.543322\\
9 & 0.563203 & 431.00 & 0.560342\\
10 & 0.581630 & 2146.00 & 0.578026\\
\if 
2 & 0.580822 & 0.00 & 0.581123\\
3 & 0.532131 & 0.01 & 0.531827\\
4 & 0.512854 & 0.06 & 0.511994\\
5 & 0.509868 & 0.52 & 0.509142\\
6 & 0.516602 & 4.00 & 0.516291\\
7 & 0.529243 & 25.80 & 0.527087\\
8 & 0.545340 & 147.00 & 0.543322\\
9 & 0.563203 & 771.00 & 0.560342\\
10 & 0.581630 & 3804.00 & 0.578026\\
\fi
\hline
\end{tabular}
\end{center}
\end{table}
\fi

\iftrue

\else

\bibliographystyle{plain}
\bibliography{simplex}

\begin{thebibliography}{10}

\bibitem{aomoto-kita-orlik-terao}
K.~Aomoto, M.~Kita, P.~Orlik, and H.~Terao.
\newblock Twisted de {Rham} cohomology groups of logarithmic forms.
\newblock {\em Advances in Mathematics}, 128:119--152, 1997.

\bibitem{edelsbrunner}
H.~Edelsbrunner.
\newblock The union of balls and its dual shape.
\newblock {\em Discrete \& Computational Geometry}, 13:415--440, 1995.

\bibitem{HNTT2013}
H.~Hashiguchi, Y.~Numata, N.~Takayama, and A.~Takemura.
\newblock Holonomic gradient method for the distribution function of the
  largest root of {W}ishart matrix.
\newblock {\em Journal of Multivariate Analysis}, 117:296--312, 2013.

\bibitem{helgason1978}
Sigurdur Helgason.
\newblock {\em Differential geometry, Lie groups, and symmetric spaces}.
\newblock Academic Press, 1978.

\bibitem{Koyama2015}
T.~Koyama.
\newblock Holonomic modules associated with multivariate normal probabilities
  of polyhedra.
\newblock {\em Funkcialaj Ekvacioj}, 59(2):217--242, August 2016.

\bibitem{2015koyama-takemura1}
T.~Koyama and A.~Takemura.
\newblock Calculation of orthant probabilities by the holonomic gradient
  method.
\newblock {\em Japan Journal of Industrial and Applied Mathematics},
  32:187--204, 2015.

\bibitem{2015koyama-takemura2}
T.~Koyama and A.~Takemura.
\newblock Holonomic gradient method for distribution function of a weighted sum
  of noncentral chi-square random variables.
\newblock {\em Computational Statistics}, 2015.

\bibitem{knost2014}
Tamio Koyama, Hiromasa Nakayama, Katsuyoshi Ohara, Tomonari Sei, and Nobuki
  Takayama.
\newblock Software packages for holonomic gradient method.
\newblock In Hoon Hong and Chee Yap, editors, {\em Mathematical Software --
  ICMS 2014}, volume 8592 of {\em Lecture Notes in Computer Science}, pages
  706--712. Springer Berlin Heidelberg, 2014.

\bibitem{mkh2012}
S.~Kuriki, T.~Miwa, and A.~J. Hayter.
\newblock Abstract tubes associated with perturbed polyhedra with applications
  to multidimensional normal probability computations.
\newblock In T.~Hibi, editor, {\em Harmony of {Gr\"obner} Bases and the Modern
  Industrial Society}, pages 169--183, Singapore, 2012. World Scientific.

\bibitem{mkh2003}
A.~Miwa, J.~Hayter, and S.~Kuriki.
\newblock The evaluation of general non-centered orthant probabilities.
\newblock {\em Journal of the Royal Statistical Society, Ser.\ B}, 65:223--234,
  2003.

\bibitem{naiman-wynn1992}
D.~Q. Naiman and H.~P. Wynn.
\newblock Inclusion--exclusion--{B}onferroni identities and inequalities for
  discrete tube--like problems via {E}uler characteristics.
\newblock {\em The Annals of Statistics}, 20(1):43--76, 1992.

\bibitem{naiman-wynn1997}
D.~Q. Naiman and H.~P. Wynn.
\newblock Abstract tubes, improved inclusion-exclusion identities and
  inequalities and importance sampling.
\newblock {\em The Annals of Statistics}, 25(5):1954--1983, 1997.

\bibitem{NNNOSTT2011}
H.~Nakayama, K.~Nishiyama, M.~Noro, K.~Ohara, T.~Sei, N.~Takayama, and
  A.~Takemura.
\newblock Holonomic gradient descent and its application to the
  {F}isher-{B}ingham integral.
\newblock {\em Advances in Applied Mathematics}, 47:639--658, 2011.

\bibitem{oaku1994}
T.~Oaku.
\newblock Computation of the characteristic variety and the singular locus of a
  system of differential equations with polynomial coefficients.
\newblock {\em Japan Journal of Industrial and Applied Mathematics},
  11:485--497, 1994.

\bibitem{plackett1954}
R.~L. Plackett.
\newblock A reduction formula for normal multivariate integrals.
\newblock {\em Biometrika}, 41(3/4):351--360, 1954.

\bibitem{R}
{R Core Team}.
\newblock {\em R: A Language and Environment for Statistical Computing}.
\newblock R Foundation for Statistical Computing, Vienna, Austria, 2012.
\newblock {ISBN} 3-900051-07-0.

\bibitem{weyenberg-howe-yoshida2015}
G.~Weyenberg, R.~Yoshida, and D.~Howe.
\newblock Normalizing kernels in the billera-holmes-vogtmann treespace.
\newblock {\em IEEE/ACM Transactions on Computational Biology and
  Bioinformatics}, 14(06):1359--1365, nov 2017.

\bibitem{ziegler}
G.~M. Ziegler.
\newblock {\em Lectures on Polytopes}.
\newblock Springer-Verlag, New York, 1995.

\end{thebibliography}

\fi

\appendix
\section{Proof of \eqref{a12} and \eqref{a11}}
\noindent
In the case where $k=1$:\/
The choresky decomposition $L=(L_{ij})_{i,j\in J(1,\ell)}$ of 
$\alpha_{J(1,\ell)}=LL^\top$ is 
$$
L_{ij} =
\begin{cases}
\sqrt{(i+\ell-d+1)/(i+\ell-d)} & (i=j)\\
-\sqrt{(i+\ell-d-1)/(i+\ell-d)} & (i=j+1)\\
0 &(\text{else})
\end{cases}.
$$
Hence, we have
$$
\det\alpha_{J(1,\ell)}
=(\det L)^2
=\prod_{i=d-\ell+1}^d\frac{i+\ell-d+1}{i+\ell-d}=\ell+1=k+\ell.
$$
We denote by $L^{-1}=(L^{ij})_{i,j\in J(1,\ell)}$ the inverse of 
the lower triangular matrix $L$.
The relations
\begin{align*}
L^{dd}L_{dd} &=1,\\
L^{dj}L_{jj}+L^{d(j+1)}L_{(j+1)j} &=0 \quad (d-\ell+1\leq j\leq d-1)
\end{align*}
from $L^{-1}L=I$ imply
$
L^{dj}=(j-d+\ell)/\sqrt{\ell(\ell+1)}
$.
Here, we denote by $I$ the identity matrix.

\if0
\color{blue}
\noindent NOTE:
$$
L^{dd}\sqrt{\frac{\ell+1}{\ell}}=1
$$
implies 
$$
L^{dd} = \sqrt{\frac{\ell}{\ell+1}}=\frac{d-d+\ell}{\sqrt{\ell(\ell+1)}}.
$$
$$
L^{dj}\sqrt{\frac{j+\ell-d+1}{j+\ell-d}}
-\frac{j+1-d+\ell}{\sqrt{\ell(\ell+1)}}
\sqrt{\frac{j+1+\ell-d-1}{j+1+\ell-d}}
=0
$$
implies 
\begin{align*}
L^{dj} 
&= 
\frac{j+1-d+\ell}{\sqrt{\ell(\ell+1)}}
\sqrt{\frac{j+\ell-d}{j+1+\ell-d}}
\sqrt{\frac{j+\ell-d}{j+\ell-d+1}}
=
\frac{j-d+\ell}{\sqrt{\ell(\ell+1)}}
\end{align*}
\color{black}
\fi

The equation $\alpha^{-1}_{J(1,\ell)}=(L^{-1})^\top L^{-1}$ implies
\begin{align*}
\alpha^{dd}_{J(1,\ell)}
&
=L^{dd}L^{dd}
=\ell/(\ell+1)
=\ell/(k+\ell)
,\\
\alpha^{d(d-\ell+1)}_{J(1,\ell)}
&
=L^{dd}L^{d(d-\ell+1)}
=1/(\ell+1)
=k/(k+\ell).
\end{align*}

\medskip\noindent
In the case where $k>1$:\/
The choresky decomposition $L=(L_{ij})_{i,j\in J(1,\ell)}$ of 
$\alpha_{J(1,\ell)}=LL^\top$ is 
$$
L_{ij} =
\begin{cases}
\sqrt{(i+1)/i} & (1\leq i\leq k-1, j=i)\\
-\sqrt{(i-1)/i} & (2\leq i\leq k-1, j=i-1)\\
\sqrt{(i+\ell-d+1)/(i+\ell-d)} & (d-\ell+1 \leq i\leq d-1, j=i)\\
-\sqrt{(i+\ell-d-1)/(i+\ell-d)} & (d-\ell+2\leq i\leq d-1, j=i-1)\\
-\sqrt{1/(j(j+1))} & (i=d, 1\leq j\leq k-1)\\
-\sqrt{(\ell-1)/\ell}& (i=d, j=d-1)\\
\sqrt{(k+\ell)/(k\ell)} & (i=j=d) \\
0 &(\text{else})
\end{cases}.
$$
Hence, we have
$$
\det\alpha_{J(k,\ell)}
=(\det L)^2
=\left(\prod_{i=1}^{k-1}\frac{i+1}{i}\right)
\left(\prod_{i=d-\ell+1}^{d-1}\frac{i+\ell-d+1}{i+\ell-d}\right)
\frac{k+\ell}{k\ell}
=k+\ell.
$$
We denote by $L^{-1}=(L^{ij})_{i,j\in J(1,\ell)}$ the inverse of 
the lower triangular matrix $L$.
The relations
\begin{align*}
L^{dd}L_{dd} &=1,\\
L^{dj}L_{jj}+L^{d(j+1)}L_{(j+1)j} &=0 \quad (d-\ell+1\leq j\leq d-1),\\
L^{d(k-1)}L_{(k-1)(k-1)}+L^{dd}L_{d(k-1)} &=0,\\
L^{dj}L_{jj}+L^{d(j+1)}L_{(j+1)j}+L^{dd}L_{dj} &=0 \quad (1\leq j\leq k-1)
\end{align*}
from $L^{-1}L=I$ imply
$$
L^{dj}=
\begin{cases}
(j+\ell-d)\sqrt{\frac{k}{\ell(k+\ell)}} & (d-\ell+1\leq j\leq d)\\
(-j+k)\sqrt{\frac{\ell}{k(k+\ell)}} & (1\leq j\leq k-1)
\end{cases}.
$$

\if0
\color{blue}
\noindent NOTE:
$$
L^{dd}\sqrt{\frac{k+\ell}{k\ell}}=1
$$
implies 
$$
L^{dd} = \sqrt{\frac{k\ell}{k+\ell}}.
$$
$$
L^{d(d-1)}\sqrt{\frac{\ell}{\ell-1}}
-\sqrt{\frac{k\ell}{k+\ell}}
\sqrt{\frac{\ell-1}{\ell}}
=0
$$
implies 
\begin{align*}
L^{d(d-1)} 
&= 
\sqrt{\frac{k\ell}{k+\ell}}
\frac{\ell-1}{\ell}
=
(\ell-1)\sqrt{\frac{k}{\ell(k+\ell)}}.
\end{align*}
$$
L^{dj}\sqrt{\frac{j+\ell-d+1}{j+\ell-d}}
-(j+1+\ell-d)\sqrt{\frac{k}{\ell(k+\ell)}}
\sqrt{\frac{j+1+\ell-d-1}{j+1+\ell-d}}
=0
$$
implies 
\begin{align*}
L^{dj} 
&=
(j+1+\ell-d)\sqrt{\frac{k}{\ell(k+\ell)}}
\frac{j+\ell-d}{j+1+\ell-d}
=
(j-d+\ell)\sqrt{\frac{k}{\ell(k+\ell)}}
\end{align*}
$$
L^{d(k-1)}\sqrt{\frac{k}{k-1}}
-\sqrt{\frac{k\ell}{k+\ell}}\sqrt{\frac{1}{(k-1)k}}
=0
$$
implies 
$$
L^{d(k-1)} = 
\sqrt{\frac{\ell}{k(k+\ell)}}
$$
$$
L^{dj}\sqrt{\frac{j+1}{j}}
-(-j-1+k)\sqrt{\frac{\ell}{k(k+\ell)}}\sqrt{\frac{j}{j+1}}
-\sqrt{\frac{k\ell}{k+\ell}}\sqrt{\frac{1}{j(j+1)}}
=0
$$
implies 
\begin{align*}
L^{dj} 
&= \frac{(-j-1+k)j+k}{j+1}\sqrt{\frac{\ell}{k(k+\ell)}}
=(-j+k)\sqrt{\frac{\ell}{k(k+\ell)}}
\end{align*}
\color{black}
\fi

The equation $\alpha^{-1}_{J(k,\ell)}=(L^{-1})^\top L^{-1}$ implies
\begin{align*}
\alpha^{d(k-1)}_{J(k,\ell)}
&
=L^{dd}L^{d(k-1)}
=\ell/(k+\ell)
,\\
\alpha^{d(d-\ell+1)}_{J(k,\ell)}
&
=L^{dd}L^{d(d-\ell+1)}
=k/(k+\ell).
\end{align*}

\end{document}